%% file: main.tex
\documentclass[11pt]{amsart}
\usepackage[T1]{fontenc}
\usepackage[utf8]{inputenc}
\usepackage{lmodern}
\usepackage{amsmath,amssymb,amsthm}
\usepackage[a4paper,margin=30mm]{geometry}
\usepackage[hidelinks]{hyperref}
\usepackage{url}
\input{glyphtounicode}
\newtheorem{theorem}{Theorem}[section]
\newtheorem{lemma}[theorem]{Lemma}
\newtheorem{proposition}[theorem]{Proposition}
\newtheorem{corollary}[theorem]{Corollary}
\theoremstyle{definition}

\theoremstyle{remark}
\newtheorem{remark}[theorem]{Remark}
\DeclareMathOperator{\stack}{stack}
\DeclareMathOperator{\estim}{estim}
\DeclareMathOperator{\dist}{d}
\DeclareMathOperator{\supp}{supp}
\newcommand{\mass}[1]{\lvert #1\rvert}
\newcommand{\NN}{\mathbb N}
\newcommand{\ZZ}{\mathbb Z}
\newcommand{\EMPTY}{\mathsf{EMPTY}}
\newcommand{\StackableAt}{\operatorname{StackableAt}}
\title{The stacking number of a tree}
\author{John Fairfax-Ball}
\date{}
\subjclass[2020]{Primary 05C57; Secondary 05C05}
\keywords{Graph pebbling, stacking number, trees, exact reachability}
\hypersetup{pdftitle={The stacking number of a tree},pdfauthor={John Fairfax-Ball}}
\begin{document}
\begin{abstract}
The stacking number of a graph is the least integer $t\ge2$ such that every
configuration of $t$ pebbles can be transformed by pebbling moves into a
configuration supported on one vertex. We prove that, for every finite tree
$T$ with at least two vertices, this number equals the rooted
distance-and-degree estimator conjectured by Csern\'ak and Soukup. The proof
uses an exact recursive characterization of stackability at a prescribed
vertex, an explicit zero-score obstruction, and a weighted cancellation
argument for arbitrary nonstackable configurations. The complete theorem is
formalized in Lean 4; the formal result has also passed Palomar mechanical
verification and is publicly registered as \texttt{PALOMAR-2026-09-25-000010}.
\end{abstract}
\maketitle

\section{Introduction}
A pebbling move removes two pebbles from a vertex and puts one pebble on an
adjacent vertex. Csern\'ak and Soukup~\cite{cs2026} introduced the stacking
number: how many pebbles guarantee that all remaining pebbles can be brought
to a single vertex, whose location may depend on the configuration? They
conjectured an explicit formula for trees and verified it computationally
for every tree of order at most seven.

For a finite tree $T$ and $r\in V(T)$, put
\begin{align}
 \ell_T(r)&=\bigl|\{v\ne r:\deg_T(v)=1\}\bigr|,\label{eq:leaves}\\
 \sigma_T(r)&=1+\sum_{\substack{v\in V(T)\\v=r\text{ or }\deg_T(v)>1}}
                 \deg_T(v)2^{\dist_T(r,v)},\label{eq:sigma}\\
 E_T(r)&=\sigma_T(r)+\ell_T(r),\qquad
 \estim(T)=\max_{r\in V(T)}E_T(r).\label{eq:estimator}
\end{align}
Here $\dist_T$ denotes graph distance. We retain the source notation
$\sigma_T$ and $\estim$; our $\ell_T(r)$ is its $\operatorname{leaf}(r)$,
and $E_T$ abbreviates its rooted estimator.
Conjecture 10.3 of~\cite{cs2026} asserts $\stack(T)=\estim(T)$.

The definition of the stacking number minimizes over $t\ge2$. Thus the
one-vertex tree has stacking number $2$, whereas the displayed estimator is
$1$. We state the conjecture with the explicit hypothesis $|V(T)|\ge2$;
this isolates a boundary convention in the definitions.

\begin{theorem}\label{thm:main}
For every finite tree $T$ with at least two vertices,
\[
 \stack(T)=\estim(T).
\]
\end{theorem}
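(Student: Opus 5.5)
The proof has two halves. For the lower bound $\stack(T)\ge\estim(T)$ we exhibit, at a root $r$ maximizing $E_T$, a nonstackable configuration with $E_T(r)-1$ pebbles. For the upper bound we show that every configuration with at least $\estim(T)$ pebbles is stackable somewhere. Both halves rest on an exact recursive description of when a configuration $p$ can be stacked at a prescribed vertex $v$; call this $\StackableAt(p,v)$.

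\textbf{Step 1: recursive characterization.} Root $T$ at $v$. For each child subtree $T_u$, we compute recursively the largest number of pebbles that can be delivered from $T_u$ to $v$, subject to the requirement that every vertex of $T_u$ ends empty. The condition ``ends empty'' is what separates stacking from ordinary pebbling: odd leftovers are forbidden. A leftover single pebble at a vertex must therefore be either absorbed into a move toward the parent, or fed by a pebble sent down from the parent. So the recursion has to track parity, and it must also allow pebbles to flow \emph{into} a subtree and back out. I would encode each subtree by the set of feasible pairs (pebbles received from the parent, pebbles sent to the parent), compute this set by a bottom-up rule, and prove the characterization by induction on $|V(T)|$. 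Stackability at $v$ then means that some choice of flows leaves every vertex other than $v$ empty.

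\textbf{Step 2: the zero-score obstruction (lower bound).} Fix $r$ with $E_T(r)=\estim(T)$. Put one pebble on each leaf other than $r$; these contribute $\ell_T(r)$. On each internal vertex $w$ (or $w=r$), place weight $\deg(w)2^{\dist(r,w)}$ on the far side of $w$ from $r$, arranged so that exactly one pebble is stranded at every candidate stacking vertex. This gives $\sigma_T(r)-1+\ell_T(r)$ pebbles in total. I would define a score, a $2^{-\dist}$-weighted potential measured relative to each candidate target $x$, and verify two things: moves never increase the score, and stacking at $x$ would require a positive score. Checking via Step 1 that the configuration fails $\StackableAt$ at every vertex should be a finite case analysis according to the position of $x$ relative to $r$.

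\textbf{Step 3: upper bound by weighted cancellation.} Let $p$ be nonstackable with $|p|=N$. For each vertex $x$, Step 1 yields a certificate of failure, namely a deficiency in the flow recursion rooted at $x$. I would turn each certificate into an inequality $N\le E_T(x)-1+(\text{error}_x)$. Then I would choose weights $\lambda_x\ge0$, concentrated along a path toward a ``center'' determined by $p$, so that the errors cancel in the combination $\sum_x\lambda_x(\cdot)$. This yields $N\le\max_x E_T(x)-1$. The natural first attempt is to take $x$ to be the vertex maximizing a weighted mass, and to compare the certificates of $x$ and its neighbours, using that an edge contributes $2^{d}$ on one side and $2^{d+1}$ on the other.

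\textbf{Main obstacle.} I expect Step 3 to be the hardest part. The certificates at different roots interact through parity effects in the recursion, and it is not clear in advance that a single choice of weights makes all the error terms cancel. If that fails, my fallback is induction on $T$: delete a leaf $u$ with neighbour $w$, and compare how $\estim$ changes, via $\sigma$ and $\ell$, with how nonstackability transfers, using the local flow rule at $w$.
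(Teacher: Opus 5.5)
Your outline has the right architecture (exact rooted criterion, extremal obstruction, weighted cancellation), but the decisive step is missing. Step~3 is not an argument. The per-root inequalities, their error terms and the weights $\lambda_x$ are never defined. Moreover, a single root's certificate can be far from that root's estimator: on the star $K_{1,m}$, $m\ge2$, with three pebbles on each leaf, every score is $0$, yet $N=3m$ exceeds $E_T(x)-1=2m$ at the centre $x$.

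The paper's mechanism is local to edges. Put $\delta_v=-S_v(C)\ge0$. Every edge $uv$ with occupied vertices on both sides carries integer messages $a=d_{u\to v}$, $b=d_{v\to u}$ satisfying $a=F(-\delta_u-b)$ and $b=F(-\delta_v-a)$. These solutions are classified into oriented states ($a=k\ge0$, $b=-2(k+\delta_v)-3$, $\delta_u\in\{2\delta_v,2\delta_v+3\}$) and two-negative states, and the classification yields an acyclic partial orientation. Let $h(v)$ be the length of a longest directed path ending at $v$. The score identities $C(v)=-\delta_v-\sum_{u\sim v}\bar d_{u\to v}$ are summed with the configuration-dependent weights $A_v=2^{h(v)}$. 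These at least double along arrows and are positive at \emph{every} vertex, rather than concentrated on a path. Each edge term is then at most $A_u+A_v$ plus the defect budget $A_w\delta_w$ of its owner $w$, the endpoint farther from a fixed occupied vertex. Owners are distinct, so these budgets cancel against $-\sum_vA_v\delta_v$, giving $\sum_vA_vC(v)\le\sum_v\deg(v)A_v$. Leaf slack converts this to $\mass{C}\le L+\sum_{\deg v>1}\deg(v)2^{h(v)}$. Only at the end does a merging lemma produce one root $r$ with $h(v)\le\dist(r,v)$ for all $v$, so $\mass{C}\le E_T(r)-1$. The estimator enters only through this final root, not through per-root inequalities. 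Your leaf-deletion fallback is likewise undeveloped.

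Step~2 also fails as written. The placement is not specified (``the far side of $w$ from $r$'' is meaningless for $w=r$). More seriously, a $2^{-\dist}$ potential cannot certify nonstackability. Any potential $\sum_vw(v)C(v)$ that never increases under moves must satisfy $w(v)\le2w(u)$ for both orientations of every edge. So $w(x)>0$ forces $w>0$ everywhere, and the potential is positive on every nonzero configuration. The obstruction is a clearing phenomenon, not a shortage of potential. The paper puts all $\sigma_T(r)-1$ pebbles on $r$ and one on each nonroot leaf. It computes the messages toward $r$ exactly as $-H_v$, with $H_v=1$ at leaves and $H_v=3+2\sum_{u\text{ child of }v}H_u$ otherwise. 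The identity $\sum_{u\text{ child of }r}H_u=\sigma_T(r)-1$ gives $S_r=0$, and zero scores then propagate outward.

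In Step~1, if feasibility means balanced move counts, your criterion is false. Take the path with vertices $r,p,v$ in order and one pebble on each. The counts $m_{pv}=m_{vp}=1$ balance at $p$ and $v$ and leave one pebble at $r$, yet no move is legal. You need three things here:
\begin{itemize}
\item an order-independent necessity argument (the paper's flux bound $\phi\le d$, $\phi\equiv d\pmod 3$, proved from the balance equations and the fact that the last move emptying an occupied branch is outward);
\item an explicit schedule for sufficiency;
\item the collapse of each branch to a single integer message plus a separate $\EMPTY$ state, which is what makes the two-sided edge equations of Step~3 available.
\end{itemize}

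Finally, $\stack$ is defined through configurations of exact mass, so one nonstackable configuration of mass $\estim(T)-1$ does not exclude a smaller threshold. You also need the following: if every configuration of mass $t\ge2$ is stackable, then $t>|V(T)|$, and hence every configuration of mass $t+1$ is stackable.
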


We prove the tree-stacking estimator conjecture of Csern\'ak and Soukup for
nontrivial trees. The proof has three structural ingredients.
A branch sends an integer message across its boundary edge, recording the
optimal gain or cost of clearing it. An empty branch is a separate state.
These messages give an exact criterion $S_r(C)>0$ for stacking a
configuration $C$ at $r$, including necessity for arbitrary move order.
An explicit configuration of mass $\estim(T)-1$ has every score zero.
For the reverse bound, the nonnegative defects $-S_v(C)$ satisfy local
two-sided edge equations. We assign the excess contributions to distinct
endpoints and cancel them against weighted defects. A consolidation lemma
then bounds the auxiliary height weights by distances from one root.

Classical graph pebbling prescribes a target vertex; see
Chung~\cite{chung1989} and Hurlbert~\cite{hurlbert1999}.
Cover pebbling prescribes positive demands at all vertices. Sj\"ostrand's
theorem~\cite{sjostrand2005} reduces its universal threshold to distance
potentials of configurations concentrated at one vertex.
In stacking, every vertex outside the eventual target must be empty.
The messages below account for this exact clearing condition.

\section{Configurations and the estimator}
All graphs in this paper are finite, simple and undirected. A tree is a
nonempty connected acyclic graph. Unless expressly stated otherwise,
$T$ has at least two vertices. We write $u\sim v$ for adjacency, and omit
the subscript $T$ from degrees and distances when no confusion is possible.
The natural numbers include zero.

A \emph{configuration} is a function $C:V(T)\to\NN$; its mass is
$\mass{C}=\sum_vC(v)$ and its support is $\supp(C)=\{v:C(v)>0\}$.
The source writes $\|c\|$ for mass; we use $\mass{C}$.
The move $u\to v$ is legal when $u\sim v$ and $C(u)\ge2$.
It replaces $C(u)$ by $C(u)-2$ and $C(v)$ by $C(v)+1$.
Reachability permits a finite sequence of legal moves, including the empty
sequence. Each move decreases mass by one.

A configuration is \emph{stacked at $r$} when
$\supp(C)=\{r\}$. Write $\StackableAt(T,C,r)$ if a configuration stacked
at $r$ is reachable from $C$. The configuration is \emph{stackable} if this
holds for some $r$.
In particular, the zero configuration is not stacked or stackable.
The stacking number is the least integer $t\ge2$ for which every
configuration of mass \emph{exactly} $t$ is stackable. Our proof will also
establish the existence of such an integer for trees.

Let $L$ be the number of degree-one vertices of $T$. The estimator has the
following useful form, uniform in the root:
\begin{equation}\label{eq:compact-estimator}
 E_T(r)-1=L+\sum_{\deg(v)>1}\deg(v)2^{\dist(r,v)}.
\end{equation}
If $r$ is internal, this follows directly from the definition. If $r$ is
a leaf, its degree-one term in $\sigma_T(r)$ replaces the leaf omitted from
$\ell_T(r)$. Also
\begin{equation}\label{eq:estim-large}
 E_T(r)\ge |V(T)|+1.
\end{equation}
Indeed, in the expansion
$1+\deg(r)+\ell_T(r)+\sum_{v\ne r,\,\deg(v)>1}\deg(v)2^{\dist(r,v)}$,
the root degree is at least one and each of the other $|V(T)|-1$ vertices
contributes at least one.

\input{sections/messages}
\input{sections/obstruction}
\input{sections/upper}

\section{The stacking formula}\label{sec:formula}
One final point is required because the stacking number uses exact sizes.
\begin{lemma}\label{lem:upward}
Let $G$ be a connected graph on $n\ge2$ vertices. If $t\ge2$ and every
configuration of mass $t$ is stackable, then $t>n$ and every configuration
of mass $t+1$ is stackable.
\end{lemma}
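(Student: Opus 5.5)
The plan is a direct two-part argument: a fixed configuration with no legal moves gives the bound $t>n$, and a single pebbling move reduces mass $t+1$ to mass $t$. Nothing specific to trees is needed; we use only connectivity and the assumption $n\ge2$.

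\emph{Step 1: $t>n$.} Suppose instead that $2\le t\le n$. Choose $t$ distinct vertices and put one pebble on each; call this configuration $C$. Every vertex holds at most one pebble, so $C$ admits no legal move. The only configuration reachable from $C$ is therefore $C$ itself, via the empty sequence. Its support has $t\ge2$ elements, so it is not stacked at any vertex. Hence $C$ is a nonstackable configuration of mass $t$, contradicting the hypothesis. So $t\ge n+1$.

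\emph{Step 2: mass $t+1$.} Let $C$ have mass $t+1$. By Step 1, $t+1\ge n+2>n$, so by pigeonhole some vertex $u$ has $C(u)\ge2$. Since $G$ is connected with $n\ge2$ vertices, $u$ has a neighbour $v$. The move $u\to v$ is legal and yields a configuration $C'$ of mass exactly $t$. By hypothesis $C'$ is stackable, so some stacked configuration is reachable from $C'$. Prepending the move $u\to v$ to that sequence shows it is reachable from $C$, so $C$ is stackable.

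I do not expect a genuine obstacle. The only care point is that Step 2 uses the strict inequality $t+1>n$ from Step 1 to guarantee a legal first move. This is why the two conclusions are proved in this order. It is also why the exact-mass definition of $\stack$ becomes monotone: induction on this lemma shows every mass $\ge\stack(G)$ is stackable.
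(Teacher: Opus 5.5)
Your proposal is correct and follows the paper's proof essentially step for step. In both, the spread configuration of $t$ single pebbles admits no legal move and gives $t>n$; then, for mass $t+1$, pigeonhole plus connectivity supply one legal move down to mass $t$, and that move is prepended to a stacking sequence.
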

\begin{proof}
If $2\le t\le n$, put one pebble on each of $t$ distinct vertices.
This configuration admits no legal move and is not stacked, a contradiction.
Now let $\mass{C}=t+1$. Since $t+1>n$, some vertex has at least two
pebbles. It has a neighbour by connectedness and $n\ge2$. One legal move
produces a configuration of mass $t$, which is stackable by hypothesis.
Prepend that move to a stacking sequence.
\end{proof}

\begin{proof}[Proof of Theorem~\ref{thm:main}]
Theorem~\ref{thm:mass-bound} implies that every configuration of mass
$\estim(T)$ is stackable. By~\eqref{eq:estim-large}, this is an admissible
threshold at least two, so $\stack(T)\le\estim(T)$.
On the other hand, Corollary~\ref{cor:extremal-obstruction} provides a
nonstackable configuration of mass $\estim(T)-1$. If a universal exact
size $t\ge2$ were smaller than $\estim(T)$, repeated application of
Lemma~\ref{lem:upward} would make every configuration of mass
$\estim(T)-1$ stackable. This contradiction proves the reverse inequality.
\end{proof}

The proof also gives an exact extremal statement:
\begin{corollary}\label{cor:zero-extremal}
For every nontrivial finite tree,
\[
 \max\{\mass{C}:C\text{ is not stackable}\}
 =\max\{\mass{C}:S_v(C)=0\text{ for every }v\}
 =\estim(T)-1.
\]
\end{corollary}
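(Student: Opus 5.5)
We must show three quantities coincide. Let $A$ be the maximum mass of a nonstackable configuration and $B$ the maximum mass of a configuration $C$ with $S_v(C)=0$ for every $v$. We prove $B\le A\le\estim(T)-1\le B$.

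For $A\le\estim(T)-1$, we first check that $A$ is well defined, since a maximum over an a priori infinite family must be attained. Theorem~\ref{thm:mass-bound} says every configuration of mass $\estim(T)$ is stackable. Applying Lemma~\ref{lem:upward} repeatedly with $t=\estim(T)\ge2$ (admissible by~\eqref{eq:estim-large}) shows every configuration of mass at least $\estim(T)$ is stackable. Hence every nonstackable configuration has mass at most $\estim(T)-1$. The zero configuration is nonstackable by convention, so the set is nonempty and its maximum exists and satisfies $A\le\estim(T)-1$.

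For $B\le A$, take $C$ with $S_v(C)=0$ for all $v$. The exact criterion from the messages section says $\StackableAt(T,C,r)$ holds iff $S_r(C)>0$, so $C$ is stackable at no vertex. Thus every zero-score configuration is nonstackable, and $B\le A$ provided $B$ exists. Existence of $B$ follows from this inclusion together with the mass bound just proved, once a witness is exhibited.

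For $\estim(T)-1\le B$, invoke the explicit obstruction underlying Corollary~\ref{cor:extremal-obstruction}. This is the configuration of mass $\estim(T)-1$ built in the obstruction section, which has every score equal to zero. It shows the zero-score set is nonempty and contains an element of mass $\estim(T)-1$, so $B\ge\estim(T)-1$. Combining the three inequalities gives the chain of equalities.

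The only delicate point is the last step. It is not enough to cite Corollary~\ref{cor:extremal-obstruction} as stated, which merely asserts nonstackability; we need the stronger fact that all scores $S_v$ vanish. I expect this to be recorded in the lemma underlying the corollary, which the introduction describes as ``an explicit configuration of mass $\estim(T)-1$ has every score zero''. If that lemma were phrased only as $S_v\le0$, I would additionally use the nonnegativity of the defects $-S_v(C)$ at every vertex from the upper-bound section. That nonnegativity holds for every nonstackable configuration, since its defects are nonnegative.
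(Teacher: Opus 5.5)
Your argument is correct and is essentially the paper's proof: Theorem~\ref{thm:root-score} makes every zero-score configuration nonstackable, Theorem~\ref{thm:mass-bound} gives the upper bound, and the obstruction $C_r$ attains it. Your detour through Lemma~\ref{lem:upward} is valid but unnecessary. Theorem~\ref{thm:mass-bound} combined with Theorem~\ref{thm:root-score} already bounds the mass of every nonstackable configuration directly, for all masses at once.

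Your worry in the last paragraph is unfounded. Corollary~\ref{cor:extremal-obstruction} explicitly states that the configuration has score zero at every vertex, and Theorem~\ref{thm:obstruction} proves $S_v(C_r)=0$ for all $v$. So the main line of your third step stands as written.

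The fallback you proposed would not have worked, however. Nonnegativity of the defects $\delta_v=-S_v(C)$ is just $S_v(C)\le0$ restated, and it cannot be upgraded to $S_v(C)=0$. For example, on the three-vertex path $abc$ put one pebble at each end. This configuration admits no legal move and is not stacked, so it is nonstackable. Yet $S_b=-2$ and $S_a=S_c=-4$. If the corollary had only given nonpositive scores, you would have needed a separate computation of the scores of the obstruction, as in the proof of Theorem~\ref{thm:obstruction}.
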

\begin{proof}
Every zero-score configuration is nonstackable by
Theorem~\ref{thm:root-score}. The upper bound is Theorem~\ref{thm:mass-bound},
and the zero-score obstruction attains it.
\end{proof}

For the singleton tree, every positive configuration is already stacked.
Consequently $\stack(K_1)=2$ under the source threshold convention, while
\eqref{eq:leaves}--\eqref{eq:estimator} give $\estim(K_1)=1$.

\section{Formal verification and reproducibility}
The complete theorem, including the message characterization, obstruction,
weighted cancellation and exact-size threshold argument, has been formalized
in Lean. The headline declaration is
\begin{center}
\texttt{TreeStack.stack\_eq\_estim\_of\_two\_le\_card}.
\end{center}
Its hypothesis and conclusion are, respectively,
\[
 2\le\operatorname{Fintype.card}V,
 \qquad
 \texttt{TreeStack.stack}\ T=\texttt{TreeStack.estim}\ T,
\]
for a finite tree $T$ on $V$. The formalization proves the result for
arbitrary finite trees, without an order bound.

The source and reproduction instructions are available in the public
repository~\cite{treestack}. The immutable formal-verification revision is
\begin{center}
\texttt{4d4969703a9f0ca7a51cbe7edf0f0338cc95cafb}.
\end{center}
It pins Lean 4.35.0-rc2 and a specific Mathlib revision. The theorem-level
audit records only the standard axioms \texttt{propext},
\texttt{Quot.sound} and \texttt{Classical.choice}.
The proved declaration is separate from Palomar's protected Challenge
statement. Palomar mechanical verification succeeded, including comparison
with that statement and independent NanoDa kernel replay. Its automated
review identified no problems. The verified result is publicly registered in
Palomar, version 1, under identifier PALOMAR-2026-09-25-000010. The public
verification record is:
\url{https://palomar-registry.org/entry.html?id=PALOMAR-2026-09-25-000010&version=1}.
This is a record of mechanical verification, not human peer review.

The repository includes a theorem-to-source correspondence for this paper,
the pinned dependency files, and commands for the Lean build and axiom audit.
It also provides independent Python implementations of legal reachability
and branch messages, and bounded regression checks. Those computations
support reproduction and testing; the proofs above do not depend on finite
enumeration. Later publication commits preserve the immutable verification
revision.

\subsection*{Development provenance}
The work was developed with extensive AI assistance under the direction of
the author, including mathematical exploration, Lean development, proof
auditing and preparation of the exposition. The author selected the problem
and scope and directed validation. This disclosure follows the development
provenance recorded in the repository.

\bibliographystyle{amsplain}
\bibliography{references}
\end{document}

%% file: sections/messages.tex
\section{Branch messages and rooted stackability}\label{sec:messages}
For an oriented edge $v\to p$, let $B_{v\mid p}$ be the component
containing $v$ after deleting $vp$. Its root is $v$ and its external boundary
vertex is $p$. The children are the branches $B_{u\mid v}$ with
$u\sim v$, $u\ne p$. Call a branch \emph{occupied} when its restriction of
$C$ is nonzero.

Define the integer transfer function by
\begin{equation}\label{eq:transfer}
 F(x)=\begin{cases}
 2x-3,&x\le1,\\
 x/2,&x\ge2\text{ even},\\
 (x-3)/2,&x\ge3\text{ odd}.
 \end{cases}
\end{equation}
In particular, $F(2)=1$ and $F(3)=0$.
Messages take values in the disjoint union $\{\EMPTY\}\sqcup\ZZ$.
For an empty branch put $m_{v\to p}=\EMPTY$. For an occupied branch put
\begin{equation}\label{eq:message}
 m_{v\to p}=d_{v\to p}:=
 F\left(C(v)+\sum_{\substack{u\sim v,\ u\ne p\\B_{u\mid v}\text{ occupied}}}
                          d_{u\to v}\right).
\end{equation}
The recursion terminates because each child branch has fewer vertices.
Only integer messages are summed. When writing a sum over all neighbours
we use the numerical contribution $\bar d_{u\to v}$, equal to
$d_{u\to v}$ on occupied branches and $0$ on empty branches. This
convention for sums does not identify the states $\EMPTY$ and $0$.
The rooted score is
\begin{equation}\label{eq:score}
 S_r(C)=C(r)+\sum_{u\sim r}\bar d_{u\to r}.
\end{equation}

\begin{theorem}[Exact boundary invariant]\label{thm:boundary}
Let $B=B_{v\mid p}$ be occupied with message $d=d_{v\to p}$.
Work on the subtree induced by $B\cup\{p\}$, with initial configuration
$C\restriction B$ on $B$ and $q\ge0$ pebbles at $p$.
A positive stack at $p$ with $B$ empty is reachable if and only if
$q+d>0$. In that case the greatest possible final pile at $p$ is $q+d$.
\end{theorem}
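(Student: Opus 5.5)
The plan is to prove by induction on $|B|$ a slightly stronger \emph{flow} statement that covers arbitrary move orders. Fix a move sequence on $B\cup\{p\}$, starting from $C\restriction B$ plus $q$ pebbles at $p$, that ends with $B$ empty. Let $a$ be the number of moves $v\to p$ and $b$ the number of moves $p\to v$. The final pile at $p$ is then $q+a-2b$. The claim to prove is:
\begin{enumerate}
\item[(i)] \emph{Upper bound.} $a-2b\le d$ for every such sequence. Since $B$ is occupied, the last pebble leaving $B$ must cross $vp$, so also $a\ge1$.
\item[(ii)] \emph{Realisability.} If $q+d>0$, some sequence ends with $B$ empty and exactly $q+d$ pebbles at $p$.
\end{enumerate}
Together these give both directions of the theorem and the maximality of $q+d$. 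For (ii) I must also ensure that the final pile $q+d$ is positive, which is exactly the hypothesis.

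I also need an auxiliary fact for empty branches. If $B_{u\mid v}$ starts empty and ends empty, with $b_u$ moves $v\to u$ and $a_u$ moves $u\to v$, then $a_u-2b_u\le0$. This follows from mass: the branch receives $b_u$ pebbles, and each move out of it consumes two, so $a_u\le b_u/2$.

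For (i), consider the children $u$ of $v$, with $a_u,b_u$ the crossing counts on the edge $uv$. Pebble conservation at $v$, together with $v$ ending empty, gives
\[
 2a+2\textstyle\sum_u b_u \;=\; C(v)+b+\sum_u a_u .
\]
Put $y=C(v)+\sum_u(a_u-2b_u)$. The induction hypothesis on each occupied child branch (viewing $v$ as its boundary vertex) gives $a_u-2b_u\le d_{u\to v}$, and the auxiliary fact handles empty children. Hence $y\le x$, where $x$ is the argument of $F$ in~\eqref{eq:message}. The conservation identity reads $2a=y+b$, so
\[
 a-2b=\frac{y-3b}{2}.
\]
Now maximise over $b\ge0$ subject to the parity constraint $y+b$ even and to $a\ge1$, i.e.\ $b\ge2-y$:
\begin{itemize}
\item if $y\ge2$ is even, the maximum is $y/2$, at $b=0$;
\item if $y\ge3$ is odd, it is $(y-3)/2$, at $b=1$;
\item if $y\le1$, it is $2y-3$, at $b=2-y$.
\end{itemize}
This maximum is exactly $F(y)$. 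Since $F$ is nondecreasing, $a-2b\le F(y)\le F(x)=d$.

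For (ii), the extremal choices above suggest an explicit schedule.
\begin{enumerate}
\item Using the induction hypothesis with $0$ pebbles at $v$, clear every occupied child with $d_{u\to v}>0$, collecting $d_{u\to v}$ pebbles at $v$.
\item Let $p$ send the required $b$ pebbles to $v$, where $b\in\{0,1,2-x\}$ as dictated by the case analysis. This is the one step that uses $q$.
\item Let $v$ service each child with $d_{u\to v}\le0$. Send it enough pebbles to reach $1-d_{u\to v}$, clear it by induction, and receive the $1-d_{u\to v}+d_{u\to v}=1$ net pebble back. By the theorem for that child, this is exactly the break-even amount.
\item Finally, $v$ sends everything it holds to $p$.
\end{enumerate}

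The main obstacle is the bookkeeping in (ii): making sure every move is legal at the moment it is made. In particular, $p$ must own $b$ pebbles before any have returned, and $v$ must have its supply for the negative children available at the right time.

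My first attempt would be to reorder the schedule to avoid this. Do all of the $p\to v$ sends at the end in the odd and $y\le1$ cases, interleaved one pebble at a time with the final $v\to p$ moves. Process the negative children one at a time, each drawing on the pile at $v$. I would then check that $q+d>0$ is exactly what keeps $p$ nonnegative at every step. If this fails, the fallback is to strengthen the induction so that the child statement allows pebbles to be fed in incrementally.
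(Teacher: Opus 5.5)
\textbf{There is a genuine gap in the upper bound (i).} Your last step uses that $F$ is nondecreasing, and it is not: $F(2k)=k>k-1=F(2k+1)$ for $k\ge1$; the paper itself notes $F(2)=1$, $F(3)=0$. So $y\le x$ and $a-2b\le F(y)$ do not give $a-2b\le F(x)$.

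Concretely, take $C(v)=3$ and one leaf child holding $3$ pebbles, so $d_{u\to v}=0$, $x=3$ and $d=0$. Your inductive hypothesis only says the child's flux is at most $0$. It does not exclude flux $-1$, which would give $y=2$ and the bound $F(2)=1$ instead of $0$. The hypothesis ``flux $\le d$'' is therefore too weak to propagate.

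The missing idea is a congruence mod $3$, which the paper builds into its Boundary flux lemma.
\begin{enumerate}
\item Every clearing of an initially empty branch has flux $\equiv0\pmod3$. Your count $2a_u\le b_u$ gives only the sign. The congruence comes from the alternating $\pm1$-weighted mass, which a move between adjacent vertices changes by a multiple of $3$.
\item Every clearing of an occupied branch has flux $\equiv d\pmod 3$. Your identity $2a=y+b$ gives $a-2b=2y-3a\equiv2y\pmod3$, while $F(z)\equiv2z\pmod3$ in every case. If the children satisfy the strengthened hypothesis, then $y\equiv x$, so the flux is $\equiv F(x)=d$.
\end{enumerate}
The strengthened hypothesis also yields $y=x-3K$ with $K\ge0$. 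The inequality you actually need is then $F(z-3)\le F(z)$ for all integers $z$, which is true (check the cases $z\le1$, $z=2,3,4$, odd $z\ge5$, even $z\ge6$) and closes the induction.

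You should also state (i) so that other moves may interleave at the boundary vertex, since that is how you apply it to the children. Projecting onto $B_{u\mid v}\cup\{v\}$ with a large initial pile at $v$ handles this, because the bound does not depend on $q$.

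\textbf{On the construction (ii).} Your worry about odd $x$ is justified. With $x=5$ and $q=0$ we have $q+d=1>0$, yet $p$ cannot send first. Your proposed repair is exactly the paper's: writing $x=2k+1$, make $k$ moves $v\to p$, then one move $p\to v$ and one move $v\to p$. This is legal because $q+k-1>0$ forces $q+k\ge2$.

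Do not postpone the sends when $x\le1$, however. There $v$ needs the $2-x$ pebbles from $p$ before it services the nonpositive children. Your original order is already legal in that case, since $q+2x-3>0$ gives $q\ge2(2-x)$. Once the final pile at $v$ is positive, the nonpositive children can be serviced in any order after the positive ones.
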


We prove necessity in a form that also applies when other branches
interleave their moves at $p$.
\begin{lemma}[Boundary flux]\label{lem:flux}
Suppose a legal sequence clears $B_{v\mid p}$. Let $m_{a b}$ count its
moves from $a$ to $b$, and put
$\phi_{v\mid p}=m_{v p}-2m_{p v}$.
If the branch is occupied initially, then
\[
 \phi_{v\mid p}\le d_{v\to p},\qquad
 \phi_{v\mid p}\equiv d_{v\to p}\pmod3.
\]
If it is empty initially, then $\phi_{v\mid p}=-3k$ for some $k\ge0$.
Moves elsewhere in the tree are permitted.
\end{lemma}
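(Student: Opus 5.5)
We prove the lemma by induction on $|B_{v\mid p}|$, simultaneously for the occupied and the empty case. The input is a pebble-balance equation at the root $v$. Fix a legal sequence (on the whole tree) that leaves $B_{v\mid p}$ empty at the end. Then every child branch $B_{u\mid v}$, with $u\sim v$ and $u\ne p$, is also empty at the end. So the induction hypothesis applies to each child with its own flux $\phi_{u\mid v}=m_{uv}-2m_{vu}$. Moves outside $B_{v\mid p}\cup\{p\}$ never change the count at $v$, which is why interleaving elsewhere is harmless.

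\emph{Step 1: balance at $v$.} Write $a=m_{vp}$ and $b=m_{pv}$. The final count at $v$ is zero, so
\[
 0=C(v)+\sum_{u}\phi_{u\mid v}+b-2a,
\]
where the sum is over the children $u$. Setting $x=C(v)+\sum_u\phi_{u\mid v}$ gives $b=2a-x$, and hence
\[
 \phi_{v\mid p}=a-2b=2x-3a .
\]
Consequently $\phi_{v\mid p}\equiv 2x\pmod 3$. The constraints on $a$ are $a\ge0$ and $b\ge0$, that is, $a\ge x/2$.

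\emph{Step 2: comparison with the message.} Let $y=C(v)+\sum_{u\text{ occupied}}d_{u\to v}$. By induction, each occupied child has $\phi_{u\mid v}\le d_{u\to v}$ with equality mod $3$. Each empty child has $\phi_{u\mid v}=-3k_u\le0$. Therefore $x\le y$ and $x\equiv y\pmod 3$.

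A direct check shows $F(z)\equiv 2z\pmod 3$ in each of the three cases of~\eqref{eq:transfer}. This gives the congruence $\phi_{v\mid p}\equiv 2x\equiv 2y\equiv F(y)=d_{v\to p}$.

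For the inequality, we first need the following fact.

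\emph{Claim: if $B_{v\mid p}$ is occupied, then $a\ge1$.} Moves inside the branch, and moves into it from $p$, always leave at least one pebble in the branch. The only move that can make the branch empty is a move across its boundary edge, i.e.\ $v\to p$. Since the branch starts nonempty and ends empty, at least one such move occurs.

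Using $a\ge\max(1,\lceil x/2\rceil)$, we maximise $2x-3a$:
\begin{itemize}
\item for $x\le1$, the maximum is $2x-3$;
\item for even $x\ge2$, it is $x/2$;
\item for odd $x\ge3$, it is $(x-3)/2$.
\end{itemize}
In every case $\phi_{v\mid p}\le F(x)$.

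It remains to show $F(x)\le F(y)$ whenever $x\le y$ and $x\equiv y\pmod3$. It suffices to check $F(x)\le F(x+3)$ for all $x$. This is routine:
\begin{itemize}
\item for $x\le-2$, compare $2x-3$ with $2x+3$;
\item the cases $x=-1,0,1$ are checked directly;
\item for $x\ge2$, we get $x/2\le x/2$ when $x$ is even, and $(x-3)/2<(x+3)/2$ when $x$ is odd.
\end{itemize}
The base case, where $v$ has no children, is the same computation with $x=y=C(v)$.

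\emph{Step 3: the empty case.} If $B_{v\mid p}$ is empty initially, then $C(v)=0$ and every child is empty. By induction $x=-3\sum_u k_u\le0$. Hence $\phi_{v\mid p}=2x-3a$ is a nonpositive multiple of $3$, as required.

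The main obstacle is the boundary case $a=0$ with $x=y\le1$. There the naive bound $\phi_{v\mid p}\le 2x$ exceeds $2y-3$, so the inequality genuinely depends on the claim that an occupied branch must emit at least one move across its boundary edge. The step that needs care is making the argument "the last emptying event is a boundary move" rigorous on the whole tree with interleaved moves.
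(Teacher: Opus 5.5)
Your proof is correct, and for occupied branches it is the paper's argument: balance at $v$ with child fluxes $d_i-3k_i$, the last-emptying move forcing $m_{vp}\ge1$, maximizing $2x-3a$ over $a\ge\max(1,\lceil x/2\rceil)$, and then $F(z-3)\le F(z)$ together with $F(z)\equiv 2z\pmod 3$. The one deviation is the empty case: you fold it into the same induction, reading $\phi_{v\mid p}=-3(a+2K)$ directly off the balance at $v$, whereas the paper sums the balance equations over the whole branch for the sign and uses an alternating-sign weighting for divisibility by three; your route is slightly shorter, and your last-emptying argument is already rigorous when moves elsewhere are interleaved.
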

\begin{proof}
The move counts satisfy, at every cleared vertex $w$,
\begin{equation}\label{eq:balance}
 C(w)+\sum_{z\sim w}m_{z w}-2\sum_{z\sim w}m_{w z}=0.
\end{equation}
For each initially occupied branch which is cleared, at least one move
crosses its boundary outwards. Indeed, consider the move after which that
branch becomes empty for the last time. An internal move leaves a pebble
inside, and an inward move also does so. Thus this move is an outward
boundary move. This observation applies to every occupied child branch,
independently of the moves outside it.

First consider an initially empty branch. Sum~\eqref{eq:balance} over
its vertices. If $I$ is the number of internal moves and
$a=m_{v p}$, $b=m_{p v}$, the result is $b-2a-I=0$.
Hence its boundary flux is
$a-2b=-3a-2I\le0$. It is divisible by three: give the boundary vertex
sign $+1$ and alternate signs along tree edges. The signed sum of the
balance equations over the branch is congruent to minus its boundary
flux modulo three, because each internal move contributes a multiple of
three. The initial and final signed sums on the empty branch vanish.
This proves the assertion for empty branches, even when they are used
temporarily.

For an occupied branch, induct on its size. By induction, each occupied
child has flux $d_i-3k_i$ with $k_i\ge0$, and each empty child has flux
$-3k_i$. Put $x=C(v)+\sum d_i$ and $K=\sum k_i$, where the first sum
runs over occupied children. At $v$, equation~\eqref{eq:balance} reads
\[
 y+b-2a=0,\qquad y=x-3K,\quad a=m_{v p}\ge1,\quad b=m_{p v}\ge0.
\]
For any integer $y$, these conditions imply
\begin{equation}\label{eq:one-vertex-flux}
 a-2b=2y-3a\le F(y),\qquad a-2b\equiv F(y)\pmod3.
\end{equation}
To see this, the least possible $a$ is
$\max(1,\lceil y/2\rceil)$; substituting this value gives precisely
the three cases of~\eqref{eq:transfer}. Any increase in $a$ lowers
the flux by three.

The transfer function satisfies
\begin{equation}\label{eq:three-loss}
 F(z-3)\le F(z),\qquad F(z)\equiv2z\pmod3.
\end{equation}
For the inequality, the difference $F(z)-F(z-3)$ is $6$ when $z\le1$,
$6,3,3$ at $z=2,3,4$, respectively, and is $0$ for odd $z\ge5$ and
$3$ for even $z\ge6$. The congruence follows from each case in
\eqref{eq:transfer}. Thus $F(x-3K)\le F(x)$ and
$F(x-3K)\equiv F(x)\pmod3$. Apply~\eqref{eq:one-vertex-flux}
to finish the induction.
\end{proof}

\begin{proof}[Proof of Theorem~\ref{thm:boundary}]
The final boundary pile in any allowed clearing sequence is $q+\phi$,
so Lemma~\ref{lem:flux} proves necessity and the upper bound.
For attainment we induct on the branch size.

We use the following scheduling observation. Suppose disjoint occupied
child branches have gains $d_i$, and a vertex initially has $a\ge0$
pebbles. If $a+\sum_i d_i>0$, process branches with positive gain first,
then those with zero gain, then those with negative gain. A positive-gain
task has positive final pile even when begun at zero. Before a zero-gain
task the pile is positive, since the total remaining gains are nonpositive
and the final total is positive. During the negative-gain tasks the pile
after each task is at least the final positive total. The inductive
boundary invariant therefore realizes every task and leaves exactly
$a+\sum_i d_i$. Tasks touch only their branch and the common boundary;
no other task's initial interior configuration is changed.

Let $x=C(v)+\sum_i d_i$ for the occupied children of $B$.
If $x\ge2$, the observation clears the children and leaves $x$ at $v$.
For $x=2k$, make $k$ moves from $v$ to $p$; the final boundary pile is
$q+k=q+F(x)$.
For $x=2k+1\ge3$, make $k$ moves from $v$ to $p$, leaving one pebble
at $v$. The assumption $q+F(x)=q+k-1>0$ implies $q+k\ge2$.
One move $p\to v$ followed by one move $v\to p$ is therefore legal;
it clears $v$ and leaves $q+k-1$ at $p$.

If $x\le1$, put $j=2-x\ge1$. The assumption
$q+F(x)=q-2j+1>0$ implies $q\ge2j$. Make $j$ moves $p\to v$.
The children's effective total at $v$ is now $x+j=2$.
The observation clears them leaving two pebbles at $v$, and one final
move $v\to p$ gives pile $q-2j+1=q+F(x)$.
These constructions also cover the case with no children and complete
the induction.
\end{proof}

\begin{theorem}[Exact rooted stackability]\label{thm:root-score}
For every configuration $C$ on a finite tree and every vertex $r$,
\[
 \StackableAt(T,C,r)\quad\Longleftrightarrow\quad S_r(C)>0.
\]
Consequently $C$ is nonstackable if and only if all its scores are
nonpositive.
\end{theorem}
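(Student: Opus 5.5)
The plan is to view the tree as rooted at $r$. Then the branches $B_{u\mid r}$ for $u\sim r$ are disjoint, and each meets the rest of the tree only through $r$. Stacking at $r$ means reaching a configuration in which every such branch is empty and $r$ carries a positive pile.

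\emph{Necessity.} Suppose a legal sequence reaches a configuration stacked at $r$. Every branch $B_{u\mid r}$ is then cleared, so Lemma~\ref{lem:flux} applies to each one; it explicitly allows moves elsewhere, including interleaving at $r$. Every move that changes the pile at $r$ crosses exactly one edge $ur$, so the final pile at $r$ equals $C(r)+\sum_{u\sim r}\phi_{u\mid r}$. By the lemma, $\phi_{u\mid r}\le d_{u\to r}$ for occupied branches and $\phi_{u\mid r}\le 0$ for empty ones, that is, $\phi_{u\mid r}\le\bar d_{u\to r}$ in all cases. Since the final pile is positive, we get $0<C(r)+\sum_u\phi_{u\mid r}\le S_r(C)$. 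The only care needed is to confirm that the balance bookkeeping is about net move counts, so that arbitrary move order at $r$ causes no trouble; the lemma's statement already covers this.

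\emph{Sufficiency.} Suppose $S_r(C)>0$. Apply the scheduling observation from the proof of Theorem~\ref{thm:boundary} at the vertex $r$, with $a=C(r)$ and tasks given by the occupied branches $B_{u\mid r}$ with gains $d_{u\to r}$. Empty branches need no work. Since $a+\sum d_{u\to r}=S_r(C)>0$, we process the positive gains first, then the zero gains, then the negative gains. By the argument given there, the pile at $r$ before each task is large enough to meet Theorem~\ref{thm:boundary}'s condition $q+d>0$, and after that task the pile is exactly $q+d$. Each task touches only its own branch together with $r$, so the tasks do not interfere. At the end all branches are empty and $r$ holds $S_r(C)>0$ pebbles, so the configuration is stacked at $r$.

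\emph{Degenerate cases and the consequence.} For a single-vertex tree the score is $C(r)$, and the equivalence is immediate. In general, if $C=0$ then every message is $\EMPTY$ and every score is $0$, which agrees with the convention that the zero configuration is not stackable. The final claim follows by quantifying over $r$: $C$ is stackable iff $S_r(C)>0$ for some $r$, so $C$ is nonstackable iff all scores are nonpositive.

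I expect the main obstacle to be the necessity direction, namely making sure that Lemma~\ref{lem:flux} really handles interleaving between sibling branches at $r$ and temporary reuse of empty branches. The lemma is stated to permit both, so the remaining work is the pile-accounting identity at $r$.
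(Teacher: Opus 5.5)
Your proposal is correct and follows the paper's proof essentially verbatim. Sufficiency applies the scheduling observation at $r$ with initial pile $C(r)$, and necessity writes the final pile at $r$ as $C(r)+\sum_{u\sim r}\phi_{u\mid r}$, bounding each flux by $\bar d_{u\to r}$ via Lemma~\ref{lem:flux}; your separate treatment of the one-vertex tree and the zero configuration is harmless but already covered by the general argument.
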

\begin{proof}
If the score is positive, apply the scheduling observation to the occupied
branches incident with $r$, starting with $C(r)$ pebbles. It gives a stack
of size exactly $S_r(C)$ at $r$.
Conversely, in any sequence ending in a stack at $r$, every incident
branch is cleared. Lemma~\ref{lem:flux} bounds the flux from each occupied
branch by its message and from each empty branch by zero. The final pile
is therefore at most $S_r(C)$, and it is positive.
The final equivalence follows by quantifying over all targets.
\end{proof}

\begin{remark}
An occupied one-vertex branch containing three pebbles has message
$F(3)=0$. It can be cleared to its boundary only when the boundary has a
positive initial pile. An empty branch may be left alone even when that
pile is zero. This is why the recursive state must retain $\EMPTY$
separately from the integer zero.
\end{remark}

%% file: sections/obstruction.tex
\section{An extremal zero-score configuration}\label{sec:obstruction}
Fix $r\in V(T)$ and root the tree at $r$. Define
\begin{equation}\label{eq:obstruction}
 C_r(v)=\begin{cases}
 \sigma_T(r)-1,&v=r,\\
 1,&v\ne r\text{ and }\deg(v)=1,\\
 0,&\text{otherwise}.
 \end{cases}
\end{equation}
For a nonroot vertex $v$, write $T_v$ for its descendant subtree and put
\begin{equation}\label{eq:obstruction-height}
 H_v=\begin{cases}
 1,&v\text{ has no children},\\
 3+2\sum_{u\text{ child of }v}H_u,&\text{otherwise}.
 \end{cases}
\end{equation}
These are positive integers, with closed form
\begin{equation}\label{eq:height-closed}
 H_v=1+2\sum_{\substack{w\in V(T_v)\\\deg_T(w)>1}}
                      \deg_T(w)2^{\dist(v,w)}.
\end{equation}
For a leaf this is immediate. For a nonleaf $v\ne r$, the number of
children is $\deg_T(v)-1$. Substitution of the formula at its children
gives
\[
 3+2\sum_uH_u
 =1+2\deg_T(v)+4\sum_u\sum_{\substack{w\in V(T_u)\\\deg_T(w)>1}}
                        \deg_T(w)2^{\dist(u,w)},
\]
which is~\eqref{eq:height-closed} at $v$.
Summing this closed form over the children of $r$ yields
\begin{equation}\label{eq:height-root}
 \sum_{u\text{ child of }r}H_u
 =\deg_T(r)+\sum_{\substack{w\ne r\\\deg_T(w)>1}}
                         \deg_T(w)2^{\dist(r,w)}
 =\sigma_T(r)-1.
\end{equation}

\begin{theorem}\label{thm:obstruction}
For the configuration $C_r$,
\[
 S_v(C_r)=0\quad(v\in V(T)),\qquad \mass{C_r}=E_T(r)-1.
\]
In particular, $C_r$ is nonstackable.
\end{theorem}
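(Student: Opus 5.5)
The plan is to compute every message of $C_r$ explicitly, in both orientations of each edge, and then read off the scores from~\eqref{eq:score}. The mass statement is immediate from~\eqref{eq:obstruction}: $\mass{C_r}=\sigma_T(r)-1+\ell_T(r)=E_T(r)-1$ by~\eqref{eq:estimator}. Nonstackability then follows from Theorem~\ref{thm:root-score} once all scores vanish.

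\emph{Upward messages.} For a nonroot $v$ with parent $p$, the branch $B_{v\mid p}=T_v$ is occupied, since it contains a descendant leaf of $T$ carrying one pebble. I will show by induction on $|T_v|$ that $d_{v\to p}=-H_v$.
\begin{itemize}
\item For a leaf, $F(1)=-1=-H_v$.
\item For an internal nonroot $v$, we have $C_r(v)=0$, so the argument of $F$ is $-\sum_u H_u\le 1$. Hence $F$ returns $-2\sum_uH_u-3=-H_v$ by~\eqref{eq:obstruction-height}.
\end{itemize}
Consequently $S_r(C_r)=\sigma_T(r)-1-\sum_{u\text{ child of }r}H_u=0$ by~\eqref{eq:height-root}.

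\emph{Downward messages.} For a nonroot $v$ with parent $p$, the branch $B_{p\mid v}$ contains $r$ and is occupied, because $C_r(r)=\sigma_T(r)-1\ge\deg(r)\ge1$. The key claim, proved by induction on depth, is that the argument $x$ of $F$ in computing $d_{p\to v}$ equals $H_v$ exactly.
\begin{itemize}
\item If $p=r$, then $x=C_r(r)-\sum_{u\ne v}H_u=H_v$, using~\eqref{eq:height-root}.
\item If $p\ne r$, then $p$ is internal and by induction $d_{g\to p}=F(H_p)=(H_p-3)/2$, where $g$ is the parent of $p$; this uses that $H_p$ is odd and at least $3$. Since $(H_p-3)/2=\sum_{u\text{ child of }p}H_u$, subtracting the upward messages of the siblings of $v$ gives $x=H_v$.
\end{itemize}
Thus $d_{p\to v}=F(H_v)$, which is $-1$ for a leaf $v$ and $(H_v-3)/2=\sum_{u\text{ child of }v}H_u$ for internal $v$. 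Here I use that every $H_v$ is odd, which is clear from~\eqref{eq:obstruction-height}.

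\emph{Scores at nonroot vertices.} For a nonroot leaf, $S_v=1+(-1)=0$. For an internal nonroot $v$,
\[
S_v=0-\sum_{u\text{ child of }v}H_u+\frac{H_v-3}{2}=0.
\]

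The main point needing care is the downward induction. It requires keeping track of occupancy, so that no $\EMPTY$ state intervenes; this holds because every branch either contains $r$ or contains a leaf. It also requires the case analysis of $F$ on odd arguments $H_v\ge3$ versus $H_v=1$. Everything else is bookkeeping with~\eqref{eq:obstruction-height} and~\eqref{eq:height-root}.
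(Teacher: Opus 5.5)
Your proposal is correct and follows the paper's proof. You compute the upward messages $d_{v\to p}=-H_v$ by induction from the leaves, get $S_r(C_r)=0$ from \eqref{eq:height-root}, and then show that each downward message is $d_{p\to v}=F(H_v)$, so the scores vanish moving away from $r$. The only cosmetic difference is how the input $H_v$ at $p$ is obtained: the paper subtracts $-H_v$ from the already-established score $S_p(C_r)=0$, while you compute it directly from the parent's downward message and the siblings' upward messages, which is the same calculation unpacked.
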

\begin{proof}
Every descendant branch is occupied, since it contains a leaf other than
$r$. The opposite side is occupied as well, since
$C_r(r)=\sigma_T(r)-1\ge\deg_T(r)\ge1$.
Induction from the leaves gives upward messages $d_{v\to p}=-H_v$:
at a leaf this is $F(1)=-1$; at an internal nonroot vertex the effective
input is $-\sum_uH_u$, so its message is
$-3-2\sum_uH_u=-H_v$. Equation~\eqref{eq:height-root} implies
$S_r(C_r)=0$.

Now suppose $p$ has score zero and $v$ is its child. Removing the
contribution $-H_v$ from its score gives effective input $H_v$ on the
$p$-side of $pv$, hence $d_{p\to v}=F(H_v)$.
If $v$ is a leaf, its score is $1+F(1)=0$. Otherwise let
$K=\sum_{u\text{ child of }v}H_u\ge1$.
Then $H_v=3+2K$, $F(H_v)=K$, and the score at $v$ is $-K+K=0$.
Propagation away from $r$ proves the assertion at every vertex.
Finally, the total mass is
$\sigma_T(r)-1+\ell_T(r)=E_T(r)-1$.
Nonstackability follows from Theorem~\ref{thm:root-score}.
\end{proof}

\begin{corollary}\label{cor:extremal-obstruction}
There is a nonstackable configuration of mass $\estim(T)-1$ whose score
is zero at every vertex.
\end{corollary}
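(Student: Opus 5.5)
This follows directly from Theorem~\ref{thm:obstruction} once $r$ is chosen well. Since $T$ is finite, choose $r\in V(T)$ at which the rooted estimator attains its maximum, so that $E_T(r)=\estim(T)$. We then take the configuration $C_r$ of~\eqref{eq:obstruction} for this root.

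Theorem~\ref{thm:obstruction} gives two facts about $C_r$. First, $S_v(C_r)=0$ for every vertex $v$, which is the zero-score property. Second, $\mass{C_r}=E_T(r)-1=\estim(T)-1$, which is the required mass.

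Nonstackability comes from Theorem~\ref{thm:root-score}: a configuration is nonstackable exactly when all its scores are nonpositive. This holds here because every score vanishes.

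There are two small points to check, so that $C_r$ is a genuine configuration and the score computation applies.

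\emph{$C_r$ is a configuration.} Its root value $\sigma_T(r)-1$ is a natural number. From~\eqref{eq:height-root} it equals $\sum_u H_u\ge\deg_T(r)\ge1$, since $T$ has at least two vertices.

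\emph{The messages used in the proof of Theorem~\ref{thm:obstruction} are integer messages, not $\EMPTY$.} Each descendant branch contains a nonroot leaf, which carries one pebble. Each branch on the root side contains $r$, which is nonempty.

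Both checks are already carried out inside Theorem~\ref{thm:obstruction}, so the corollary needs no new obstacle. The only real choice is taking $r$ to be a maximizer of $E_T$.
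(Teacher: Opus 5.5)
Your proposal is correct and matches the paper's proof: choose a root $r$ maximizing $E_T$ and apply Theorem~\ref{thm:obstruction} to $C_r$. Nonstackability then follows from Theorem~\ref{thm:root-score}, and your two sanity checks (that $C_r(r)\ge 1$ and that all messages are integer rather than $\EMPTY$) are already handled inside that theorem's proof.
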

\begin{proof}
Choose a root maximizing $E_T$ and apply Theorem~\ref{thm:obstruction}.
\end{proof}

%% file: sections/upper.tex
\section{The arbitrary-defect upper bound}\label{sec:upper}

We now bound the mass of every configuration whose scores are all
nonpositive.  All degrees, paths and distances in this section belong to the
original tree.  In particular, the argument retains the empty branches:
their contributions will be cancelled separately from the contributions of
edges with occupied support on both sides.

Let $T$ be a finite tree with at least two vertices, and let $C$ satisfy
$S_v(C)\leq 0$ for every $v$.  Set
\[
 \delta_v=-S_v(C)\geq 0.
\]
If $C$ is identically zero the desired bound is immediate.  Henceforth choose
an occupied vertex $o$, so that $C(o)>0$.  This vertex is used to assign edge
owners; the estimator root obtained at the end may be different.

\subsection{Retained edges and the local defect equations}

For adjacent vertices $u,v$, write $T_{u\mid v}$ for the component containing
$u$ after deleting $uv$.  Call $uv$ \emph{retained} if both
$T_{u\mid v}$ and $T_{v\mid u}$ contain an occupied vertex.  Its two messages
are then integers.  For any edge, let
\[
 \overline d_{u\to v}=
 \begin{cases}
 d_{u\to v},&T_{u\mid v}\text{ is occupied},\\
 0,&T_{u\mid v}\text{ is empty}.
 \end{cases}
\]
This notation records the numerical contribution of a message in a score;
it does not identify the message $\EMPTY$ with the integer message
zero.  Thus
\begin{equation}\label{eq:upper-score}
 C(v)=-\delta_v-\sum_{u\sim v}\overline d_{u\to v}.
\end{equation}

\begin{lemma}[Support separation]\label{lem:upper-support}
The two sides of an edge partition $V(T)$.  Every nonretained edge has
exactly one empty side.  A retained edge lies on a path between two occupied
vertices, and every edge of a path between two occupied vertices is retained.
More specifically, if $uv$ is retained and $x$ is an occupied vertex in
$T_{u\mid v}$, every edge on the path from $u$ to $x$ is retained.  If
$uv$ is retained and no other retained edge is incident with $u$, then
$C(u)>0$.
Finally, an endpoint on the empty side of an edge is incident with no
retained edge.
\end{lemma}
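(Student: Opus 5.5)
This is elementary tree combinatorics, and I will prove the claims in the order stated, using only that deleting an edge of a tree leaves exactly two components. That fact gives the first claim: $T_{u\mid v}$ and $T_{v\mid u}$ are disjoint and cover $V(T)$. For the second claim, note that $C$ is not identically zero, since we fixed $o$ with $C(o)>0$. So at least one side of every edge contains an occupied vertex. A nonretained edge therefore has exactly one empty side.

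\textbf{Retained edges and paths.} If $uv$ is retained, choose occupied vertices $x\in T_{u\mid v}$ and $y\in T_{v\mid u}$. The unique $x$--$y$ path must cross from one component to the other, so it uses the edge $uv$. Conversely, let $e=ab$ be an edge on the path between occupied vertices $x$ and $y$. Deleting $e$ separates $x$ from $y$, so each side of $e$ contains one of them, and $e$ is retained.

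For the refined statement, let $x$ be occupied in $T_{u\mid v}$ and let $y$ be occupied in $T_{v\mid u}$. The path from $y$ to $x$ passes through $v$ and then $u$, so it contains the whole path from $u$ to $x$. By the previous paragraph, every edge on that path is retained.

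\textbf{The last two claims.} Suppose $uv$ is retained, no other retained edge meets $u$, and yet $C(u)=0$. Pick an occupied $x\in T_{u\mid v}$. Then $x\ne u$, so the path from $u$ to $x$ has a first edge $uw$ with $w\ne v$. By the refined claim this edge is retained, which is a contradiction. Hence $C(u)>0$.

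For the final assertion, let $u$ lie on the empty side $T_{u\mid v}$ of $uv$, and suppose some edge $uw$ is retained. If $w=v$, then $uv$ is retained, contradicting the emptiness of $T_{u\mid v}$. If $w\ne v$, then $T_{w\mid u}\subseteq T_{u\mid v}$, since $w$ lies on $u$'s side of $uv$ and is separated from $v$ by $u$. So $T_{w\mid u}$ is empty, again a contradiction.

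There is no real obstacle here. The only point needing care is the containment $T_{w\mid u}\subseteq T_{u\mid v}$ for neighbours $w\ne v$ of $u$, which holds because any path from $w$ to $v$ passes through $u$.
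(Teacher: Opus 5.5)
Your proof is correct and follows the paper's argument essentially step for step: the two-component partition together with the occupied vertex $o$, the path-crossing argument in both directions, the $x$--$y$ path through $uv$ for the refined claim, the first edge of the $u$--$x$ path for $C(u)>0$, and the containment $T_{w\mid u}\subseteq T_{u\mid v}$ for the final assertion. The fact that this first edge avoids $v$ deserves its one-line reason: the $u$--$x$ path stays inside the connected component $T_{u\mid v}$, which does not contain $v$.
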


\begin{proof}
Deleting an edge of a tree produces exactly two components, giving the
partition.  Since $C(o)>0$, at least one component is occupied.  If both
are occupied, choose one occupied vertex in each; their unique path crosses
the deleted edge.  Conversely, deleting any edge on the path between two
occupied vertices separates those vertices, so that edge is retained.

For the more specific assertion, choose an occupied vertex $y$ in
$T_{v\mid u}$.  The path from $x$ to $y$ consists of the path from $x$ to
$u$, the edge $uv$, and the path from $v$ to $y$.  Every edge on its
$x$--$u$ segment is therefore retained.  If $u$ is unoccupied, an occupied
vertex $x$ on its side is distinct from $u$, and the edge of that
segment incident with $u$ is a retained edge other than $uv$.  This proves
the next assertion by contraposition.

For the last assertion, let $T_{v\mid u}$ be empty.  The edge $uv$ itself
is nonretained.  For every other neighbour $w$ of $v$, the component
$T_{w\mid v}$ lies inside $T_{v\mid u}$ and is empty.  Hence $wv$ is
also nonretained.
\end{proof}

For a retained edge $uv$, put $a=d_{u\to v}$ and $b=d_{v\to u}$.
Removing the opposite message from each endpoint score gives the two
effective branch inputs.  The message recursion therefore yields
\begin{equation}\label{eq:upper-edge-equations}
 a=F(-\delta_u-b),\qquad b=F(-\delta_v-a).
\end{equation}
These equations are only asserted for retained edges.

\begin{lemma}[Classification of retained edge states]\label{lem:upper-classification}
The solutions of \eqref{eq:upper-edge-equations} with
$\delta_u,\delta_v\geq0$ have exactly the following forms, up to exchanging
$u$ and $v$:
\begin{enumerate}
 \item An \emph{oriented state}, with
 \begin{equation}\label{eq:upper-oriented}
 a=k\geq0,\qquad b=-2(k+\delta_v)-3,
 \qquad
 \delta_u=2\delta_v\ \text{or}\ \delta_u=2\delta_v+3.
 \end{equation}
 The latter alternative requires $k>0$.  We direct this edge $u\to v$.
 \item A \emph{two-negative state}, with unique $r,q\in\NN$ such that
 \begin{equation}\label{eq:upper-negative}
 a=-(2r+1),\quad b=-(2q+1),\quad
 \delta_u=r+2q,\quad\delta_v=q+2r.
 \end{equation}
\end{enumerate}
Both messages cannot be nonnegative.
\end{lemma}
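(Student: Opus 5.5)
We solve the system \eqref{eq:upper-edge-equations} by splitting on the signs of the two messages. The key fact about the transfer function \eqref{eq:transfer} is that $F(x)\ge0$ exactly when $x\ge2$. In that case $x=2F(x)$ or $x=2F(x)+3$, according to parity. When $x\le1$ the value $F(x)=2x-3$ is negative and odd.

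We first dispose of the case $a,b\ge0$. Then both arguments are at least $2$. In particular $-\delta_u-b\ge2$ forces $\delta_u\le -b-2<0$, contradicting $\delta_u\ge0$. This proves the final assertion that both messages cannot be nonnegative.

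Next, suppose exactly one message is nonnegative, say $a=k\ge0$ and $b<0$. The argument of the second equation is $-\delta_v-k$. It is at most $0\le1$ automatically, so $b=2(-\delta_v-k)-3=-2(k+\delta_v)-3$, as in \eqref{eq:upper-oriented}. The first equation requires $x:=-\delta_u-b=2k+2\delta_v+3-\delta_u\ge2$ with $F(x)=k$. The even branch gives $x=2k$, i.e.\ $\delta_u=2\delta_v+3$. The odd branch gives $x=2k+3$, i.e.\ $\delta_u=2\delta_v$.

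We then check the side conditions on $x$.
\begin{itemize}
\item In the even branch we need $x=2k\ge2$, which is exactly the requirement $k>0$.
\item In the odd branch $x=2k+3\ge3$ holds automatically.
\end{itemize}
Conversely, each such tuple satisfies both equations by direct substitution. This gives the oriented state, directed $u\to v$; the symmetric case is obtained by exchanging $u$ and $v$.

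Finally, suppose both messages are negative. Then both arguments are at most $1$ and both messages are negative odd integers. Write $a=-(2r+1)$ and $b=-(2q+1)$ with $r,q\in\NN$; this representation is unique. The equation $a=2(-\delta_u-b)-3$ becomes $-(2r+1)=-2\delta_u+4q+2-3$, so $\delta_u=r+2q$. Symmetrically, $\delta_v=q+2r$. The required conditions $-\delta_u-b\le1$, i.e.\ $2q+1-r-2q=1-r\le1$, and the corresponding one for $v$ hold automatically. Conversely, these formulas give solutions. This is the two-negative state \eqref{eq:upper-negative}.

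No step is a real obstacle. The only place needing care is keeping the parity cases of $F$ correctly matched to the two alternatives for $\delta_u$, together with the boundary condition $x\ge2$ that produces the restriction $k>0$.
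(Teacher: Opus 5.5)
Your proposal is correct and follows essentially the same route as the paper: it characterizes the preimages of $F$ by sign and parity (nonnegative values $k$ come only from $z=2k$ with $k>0$ or $z=2k+3$, and negative values $-(2r+1)$ only from $z=1-r$), then splits on the signs of $a$ and $b$ and solves the resulting linear relations for $\delta_u,\delta_v$. Your separate treatment of the case $a,b\geq0$ just makes explicit the paper's observation that $a\geq0$ forces $-\delta_v-a\leq0$ and hence $b<0$.
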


\begin{proof}
The definition of $F$ gives
\begin{align*}
 F(z)<0&\iff z\leq1,\\
 F(z)=-(2r+1)&\iff z=1-r\qquad(r\geq0),\\
 F(z)=k&\iff
   \bigl(k>0\text{ and }z=2k\bigr)\text{ or }z=2k+3
   \qquad(k\geq0).
\end{align*}
Indeed, the first two statements use $F(z)=2z-3$ for $z\leq1$;
the even positive and odd positive cases of $F$ give the third, including
the unique preimage $3$ of zero.

If $a=k\geq0$, then $-\delta_v-k\leq0$, so the second edge equation
gives $b=-2(k+\delta_v)-3<0$.  Substituting this into the first equation
and using the nonnegative preimages gives either
$-\delta_u+2k+2\delta_v+3=2k+3$, or the same left side equals $2k$
with $k>0$.  These are exactly the two defect alternatives in
\eqref{eq:upper-oriented}.  The case $b\geq0$ is symmetric.

If both messages are negative, the first two identities express them
uniquely as $-(2r+1)$ and $-(2q+1)$, with $r,q\geq0$.
Their unique preimages give
$-\delta_u+2q+1=1-r$ and
$-\delta_v+2r+1=1-q$, proving \eqref{eq:upper-negative}.
Conversely, substitution verifies every displayed state, subject to the
stated restriction when $k=0$.
\end{proof}

Call an oriented state \emph{defective} if its head defect $\delta_v$ is
positive.  Call a two-negative state defective if $r+q>0$.  The remaining
two-negative state is the neutral state $(-1,-1)$, with both endpoint
defects zero.  The defective retained edges form a forest, since they are
a subset of the edges of $T$.

\subsection{Owners, orientations and auxiliary weights}

\begin{lemma}[Injective incident owners]\label{lem:upper-owners}
For each edge $e=uv$, let $\omega(e)$ be the endpoint farther from $o$.
Then $\omega$ is well defined and injective on $E(T)$.  If
$T_{v\mid u}$ is empty, then $\omega(uv)=v$.
\end{lemma}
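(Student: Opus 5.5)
The argument rests entirely on the geometry of distances from $o$ in a tree. For an edge $e=uv$, the endpoints satisfy $\dist(o,u)\ne\dist(o,v)$. Indeed, the two distances differ by at most one because $u\sim v$. They cannot be equal: then the paths from $o$ to $u$ and from $o$ to $v$, together with $uv$, would close a walk of odd length containing a cycle, and a tree is bipartite. More concretely, if $v$ lies in $T_{v\mid u}$ and $o$ lies in $T_{u\mid v}$, the unique $o$--$v$ path passes through $u$, so $\dist(o,v)=\dist(o,u)+1$. Hence exactly one endpoint is farther from $o$, and $\omega(e)$ is well defined. In the same way, $\omega(uv)=v$ holds precisely when $o\in T_{u\mid v}$.

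For injectivity, suppose $\omega(e)=\omega(e')=w$ with $e=uw$ and $e'=u'w$. Then both $u$ and $u'$ are at distance $\dist(o,w)-1$ from $o$ and lie on a shortest $o$--$w$ path. In a tree the $o$--$w$ path is unique, so its last edge is unique, giving $u=u'$ and $e=e'$. Equivalently, $\omega(e)$ is the non-$o$ endpoint whose parent edge, when $T$ is rooted at $o$, is $e$. The map $e\mapsto\omega(e)$ is therefore the standard bijection between edges and non-root vertices.

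For the last assertion, suppose $T_{v\mid u}$ is empty. Since $C(o)>0$, the vertex $o$ is occupied, so $o\notin T_{v\mid u}$. By the partition in Lemma~\ref{lem:upper-support}, $o\in T_{u\mid v}$. The first paragraph then gives $\omega(uv)=v$.

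The only point needing care is the parity or uniqueness-of-paths fact in the first step. It is classical, so the main "obstacle" is simply stating clearly that the $o$--$v$ path must cross the edge $uv$ through $u$.
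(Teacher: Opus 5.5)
Your proof is correct and takes essentially the paper's route. The unique path from $o$ crosses $uv$ through the nearer endpoint, so the endpoint distances differ by exactly one. Injectivity follows because the last edge of the unique $o$--$w$ path (the predecessor edge of $w$) is unique. The final claim holds because the occupied vertex $o$ must lie on the $u$ side. Your bipartiteness remark is redundant given the direct path argument, but it is harmless.
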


\begin{proof}
The unique path from $o$ crosses $uv$ precisely when reaching the endpoint
on the side not containing $o$.  Thus the endpoint distances differ by
one.  The owner is the more distant endpoint.  Each vertex other than $o$
has exactly one neighbour preceding it on its unique path from $o$;
consequently it owns exactly that predecessor edge.  In particular two
edges cannot share an owner.  If $T_{v\mid u}$ is empty, the occupied
vertex $o$ lies on the $u$ side, so $v$ is the farther endpoint.
\end{proof}

Define an auxiliary partial orientation of $T$ as follows.  Keep the direction
of every oriented state, including those with zero head defect.  Direct each
defective two-negative edge toward its owner.  Leave neutral edges and
nonretained edges unoriented.  The classification ensures that no edge has
both directions.  Since $T$ has no undirected cycle, this directed graph has
no directed cycle.

Let $h(v)$ be the maximum length of a directed path ending at $v$, allowing
the path of length zero, and set
\begin{equation}\label{eq:upper-weights}
 A_v=2^{h(v)}.
\end{equation}
The maximum exists because a directed path has no repeated vertex.
For an auxiliary arrow $u\to v$, a longest path ending at $u$ can be
extended by this arrow: if it already contained $v$, it would give a
directed cycle.  Hence
\begin{equation}\label{eq:upper-doubling}
 h(v)\geq h(u)+1,\qquad A_v\geq2A_u.
\end{equation}
A vertex has positive height exactly when it has an incoming auxiliary
arrow.  Since every auxiliary arrow is retained, Lemma~\ref{lem:upper-support}
also shows that the endpoint on the empty side of any edge has height zero
and weight one.

\subsection{Local charges and global defect cancellation}

For an undirected edge $e=uv$, define its weighted message contribution by
\[
 E_e=-\bigl(A_v\overline d_{u\to v}
             +A_u\overline d_{v\to u}\bigr).
\]
Define two nonnegative charges, with disjoint conditions of application:
\begin{align*}
 Q_e^{\mathrm{ret}}
  &=\begin{cases}
     A_{\omega(e)}\delta_{\omega(e)},&e\text{ is retained and defective},\\
     0,&\text{otherwise},
    \end{cases}\\
 Q_e^{\mathrm{emp}}
  &=\begin{cases}
     A_{\omega(e)}\delta_{\omega(e)},&e\text{ is nonretained},\\
     0,&\text{otherwise}.
    \end{cases}
\end{align*}

\begin{lemma}[Retained-edge charge]\label{lem:upper-retained-charge}
For every retained edge $uv$,
\[
 E_{uv}\leq A_u+A_v+Q_{uv}^{\mathrm{ret}}.
\]
\end{lemma}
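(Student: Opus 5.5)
The plan is a direct case analysis over the states of Lemma~\ref{lem:upper-classification}. Since $uv$ is retained, both messages are integers and $\overline d_{u\to v}=a$, $\overline d_{v\to u}=b$. Hence
\[
 E_{uv}=-(A_va+A_ub).
\]
In each case the inequality will follow from the doubling inequality~\eqref{eq:upper-doubling} along the auxiliary arrow, together with the defect relations of the classification. Where the edge is defective, we also use the explicit value of $Q^{\mathrm{ret}}_{uv}=A_{\omega(uv)}\delta_{\omega(uv)}$. Nothing about $\omega$ beyond the fact that it is one of $u,v$ is needed.

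\emph{Oriented state.} After exchanging names, take $a=k\ge0$ and $b=-2(k+\delta_v)-3$, with the edge directed $u\to v$, so that $A_v\ge2A_u$. Then
\[
 E_{uv}=-A_vk+A_u(2k+2\delta_v+3)\le -2A_uk+A_u(2k+2\delta_v+3)=A_u(2\delta_v+3),
\]
using $k\ge0$. It therefore suffices to show $2A_u(\delta_v+1)\le A_v+Q^{\mathrm{ret}}_{uv}$.
\begin{itemize}
 \item If $\delta_v=0$, this is $2A_u\le A_v$.
 \item If $\delta_v>0$, the edge is defective and $A_v\ge 2A_u$ covers one copy of $2A_u$. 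It remains to show $Q^{\mathrm{ret}}_{uv}\ge2A_u\delta_v$.
 \begin{itemize}
  \item If $\omega=v$, then $Q=A_v\delta_v\ge2A_u\delta_v$.
  \item If $\omega=u$, then $Q=A_u\delta_u\ge 2A_u\delta_v$, because both alternatives in~\eqref{eq:upper-oriented} give $\delta_u\ge2\delta_v$.
 \end{itemize}
\end{itemize}

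\emph{Two-negative state.} Here $a=-(2r+1)$ and $b=-(2q+1)$, so
\[
 E_{uv}=A_u+A_v+2rA_v+2qA_u.
\]
In the neutral state $r=q=0$, and the bound holds with $Q=0$. Otherwise the edge is defective and is directed toward its owner. By symmetry, take $\omega=v$, so $u\to v$ is an auxiliary arrow and $A_v\ge2A_u$. Then
\[
 Q^{\mathrm{ret}}_{uv}=A_v\delta_v=A_v(q+2r)\ge 2rA_v+2qA_u,
\]
which is exactly the required excess. The case $\omega=u$ is identical with the roles exchanged, using $\delta_u=r+2q$.

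The main obstacle is the oriented defective case with owner at the tail $u$. There the charge is weighted by the smaller weight $A_u$, so the bound relies on the defect inequality $\delta_u\ge 2\delta_v$ from the classification rather than on doubling. The step to check carefully is that both alternatives $\delta_u=2\delta_v$ and $\delta_u=2\delta_v+3$ supply this inequality. Also, the $k$-terms must cancel exactly via $A_v\ge2A_u$, which requires $k\ge0$.
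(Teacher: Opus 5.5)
Your proof is correct and matches the paper's argument essentially step for step. You use the same case split over the classification and apply doubling along the auxiliary arrow to absorb the $k$-terms. You pay the oriented excess $2A_u\delta_v$ from whichever endpoint is the owner, via $\delta_u\ge2\delta_v$ or $A_v\ge2A_u$, and in the defective two-negative case you bound $2A_uq$ by $A_vq$ after orienting toward the owner.
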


\begin{proof}
First consider an oriented state $u\to v$ with parameter $k\geq0$.
By \eqref{eq:upper-oriented} and \eqref{eq:upper-doubling},
\begin{align*}
 E_{uv}
 &=A_u\bigl(2(k+\delta_v)+3\bigr)-A_vk\\
 &=3A_u+k(2A_u-A_v)+2A_u\delta_v\\
 &\leq A_u+A_v+2A_u\delta_v.
\end{align*}
The excess $2A_u\delta_v$ is zero when the edge is nondefective.
Otherwise either endpoint can pay it: the local defect relation gives
$A_u\delta_u\geq2A_u\delta_v$, and doubling gives
$A_v\delta_v\geq2A_u\delta_v$.  Thus it is bounded by the charge at
the prescribed owner, regardless of whether that owner is the tail or head.

For a defective two-negative state, name its endpoints so that the auxiliary
direction is $u\to v$, with $v=\omega(uv)$, and use the parameters
in \eqref{eq:upper-negative}.  Then
\begin{align*}
 E_{uv}
 &=A_u+A_v+2A_vr+2A_uq\\
 &\leq A_u+A_v+A_v(2r+q)\\
 &=A_u+A_v+A_v\delta_v.
\end{align*}
Here $q\geq0$ and $2A_u\leq A_v$ justify the inequality.  A neutral
edge has $r=q=0$ and contributes exactly $A_u+A_v$, completing the proof.
\end{proof}

\begin{lemma}[Empty-side cancellation]\label{lem:upper-empty-charge}
If $T_{v\mid u}$ is empty, then
\[
 E_{uv}=A_v\delta_v=Q_{uv}^{\mathrm{emp}}.
\]
\end{lemma}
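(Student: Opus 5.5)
The plan is to compute both sides directly from the definitions. Every term that is not $\pm A_v d_{u\to v}$ vanishes because the side $T_{v\mid u}$ carries no pebbles.

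First I will evaluate $E_{uv}$. Since $T_{v\mid u}$ is empty, the convention for $\overline d$ gives $\overline d_{v\to u}=0$. The edge $uv$ cannot have both sides empty, because $C(o)>0$. So $T_{u\mid v}$ is occupied, and $\overline d_{u\to v}=d_{u\to v}$ is an integer message. Hence
\[
 E_{uv}=-A_v\,d_{u\to v}.
\]

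Next I will compute the score at $v$ using \eqref{eq:score}. The vertex $v$ lies in the empty component $T_{v\mid u}$, so $C(v)=0$. For every neighbour $w\ne u$ of $v$, the component $T_{w\mid v}$ lies inside $T_{v\mid u}$, as in the last part of the proof of Lemma~\ref{lem:upper-support}. It is therefore empty and contributes $\overline d_{w\to v}=0$. Consequently $S_v(C)=d_{u\to v}$, so $\delta_v=-d_{u\to v}$ and $E_{uv}=A_v\delta_v$. Equivalently, one can read this off from \eqref{eq:upper-score} at $v$ with $C(v)=0$.

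Finally I will identify the charge. The edge $uv$ has an empty side, so it is nonretained and $Q_{uv}^{\mathrm{emp}}=A_{\omega(uv)}\delta_{\omega(uv)}$. By Lemma~\ref{lem:upper-owners}, emptiness of $T_{v\mid u}$ gives $\omega(uv)=v$, so $Q_{uv}^{\mathrm{emp}}=A_v\delta_v$.

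Incidentally $A_v=1$ here, but this is not needed. The only point requiring care is that every other branch at $v$ is empty, and Lemma~\ref{lem:upper-support} already supplies this, so I expect no genuine obstacle.
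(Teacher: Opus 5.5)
Your proposal is correct and follows the paper's proof essentially step by step. You use $\overline d_{v\to u}=0$, occupancy of the $u$ side via $o$, and the vanishing of $C(v)$ and of every other branch contribution at $v$ to get $S_v(C)=d_{u\to v}$, then Lemma~\ref{lem:upper-owners} to identify $v$ as the owner; your remark that $A_v=1$ is not needed for the identity is also accurate, since the paper mentions it only in passing.
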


\begin{proof}
The message $m_{v\to u}$ is literally $\EMPTY$.  At $v$,
the initial pile is zero, and every incident branch other than the $u$ side
is empty.  Consequently
$S_v(C)=d_{u\to v}$, where this last message is an integer because the
$u$ side contains $o$.  Thus
$\overline d_{v\to u}=0$ and $d_{u\to v}=-\delta_v$, giving
$E_{uv}=A_v\delta_v$.  By Lemma~\ref{lem:upper-owners}, $v$ is the
owner.  In addition $A_v=1$, as observed above.  No integer defect-state
equations were applied to the empty message.
\end{proof}

\begin{proposition}[Weighted cancellation]\label{prop:upper-weighted}
The auxiliary weighted mass satisfies
\begin{equation}\label{eq:upper-weighted-bound}
 \sum_v A_vC(v)\leq\sum_v\deg_T(v)A_v.
\end{equation}
\end{proposition}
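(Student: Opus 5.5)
We multiply the identity \eqref{eq:upper-score} by $A_v$ and sum over all vertices:
\[
 \sum_vA_vC(v)=-\sum_vA_v\delta_v-\sum_v\sum_{u\sim v}A_v\overline d_{u\to v}.
\]
Each ordered pair $(u,v)$ with $u\sim v$ appears once in the double sum, so the double sum equals $\sum_{e}(A_v\overline d_{u\to v}+A_u\overline d_{v\to u})=-\sum_eE_e$. Hence
\[
 \sum_vA_vC(v)=\sum_{e\in E(T)}E_e-\sum_vA_v\delta_v .
\]
The plan is to bound $\sum_eE_e$ edge by edge and then cancel the charges against the defect sum.

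For each edge we split into the two cases of Lemma~\ref{lem:upper-support}. For a retained edge, Lemma~\ref{lem:upper-retained-charge} gives $E_e\le A_u+A_v+Q_e^{\mathrm{ret}}$. For a nonretained edge, exactly one side is empty; naming it $T_{v\mid u}$, Lemma~\ref{lem:upper-empty-charge} gives $E_e=Q_e^{\mathrm{emp}}$, and in particular $E_e\le A_u+A_v+Q_e^{\mathrm{emp}}$. Since the two charges apply under disjoint conditions, in all cases
\[
 E_e\le A_u+A_v+Q_e,\qquad Q_e:=Q_e^{\mathrm{ret}}+Q_e^{\mathrm{emp}}\le A_{\omega(e)}\delta_{\omega(e)}.
\]
Summing gives $\sum_e(A_u+A_v)=\sum_v\deg_T(v)A_v$, because each vertex appears once per incident edge.

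It remains to show $\sum_eQ_e\le\sum_vA_v\delta_v$. This follows from Lemma~\ref{lem:upper-owners}: $\omega$ is injective on $E(T)$, so distinct edges charge distinct vertices. Each term $A_v\delta_v$ is nonnegative, so the sum of the charges is at most the sum over the image of $\omega$, which is at most the full defect sum. Combining the three displays yields \eqref{eq:upper-weighted-bound}. The case where $C$ is identically zero is trivial.

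The main obstacle is bookkeeping rather than estimation, since the hard inequalities are already in the two charge lemmas. The point needing care is that every edge gets exactly one of the two bounds, with the correct owner. In the nonretained case, Lemma~\ref{lem:upper-owners} ensures that the empty-side endpoint is the owner, which matches the definition of $Q^{\mathrm{emp}}$. The injectivity of $\omega$ is what prevents a defect from being spent twice.
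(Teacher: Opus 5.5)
Your proposal is correct and follows the paper's proof essentially step for step. Both use the same edge-grouped identity $\sum_vA_vC(v)=\sum_eE_e-\sum_vA_v\delta_v$, the uniform per-edge bound $E_e\le A_u+A_v+Q_e^{\mathrm{ret}}+Q_e^{\mathrm{emp}}$ from the two charge lemmas, and injectivity of $\omega$ with nonnegative budgets to cancel the charges against the defect sum. (Your remark about $C\equiv0$ is unnecessary, since the construction of the weights $A_v$ already presupposes an occupied vertex $o$, but it does no harm.)
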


\begin{proof}
Grouping \eqref{eq:upper-score} by undirected edges gives the exact identity
\begin{equation}\label{eq:upper-weighted-identity}
 \sum_v A_vC(v)=\sum_{e\in E(T)}E_e-\sum_v A_v\delta_v.
\end{equation}
Lemmas~\ref{lem:upper-retained-charge} and \ref{lem:upper-empty-charge}
give, uniformly over all ambient edges,
\[
 E_{uv}\leq A_u+A_v+Q_{uv}^{\mathrm{ret}}+Q_{uv}^{\mathrm{emp}}.
\]
Each sum of the two charges is at most its one owner budget
$A_{\omega(e)}\delta_{\omega(e)}$.  Owners are injective on all edges,
and every vertex budget is nonnegative, so
\[
 \sum_e\bigl(Q_e^{\mathrm{ret}}+Q_e^{\mathrm{emp}}\bigr)
 \leq\sum_e A_{\omega(e)}\delta_{\omega(e)}
 \leq\sum_v A_v\delta_v.
\]
Substitution in \eqref{eq:upper-weighted-identity} cancels the budgets.
Finally, each vertex weight occurs once for each incident edge, so
$\sum_{uv\in E(T)}(A_u+A_v)=\sum_v\deg_T(v)A_v$.
\end{proof}

\subsection{Removing leaf weights}

Let $L$ be the number of ambient vertices of degree one.

\begin{lemma}[Ambient leaf slack]\label{lem:upper-leaf-slack}
Every degree-one vertex $v$ with $A_v>1$ is occupied.  Consequently
\begin{equation}\label{eq:upper-compact}
 \mass{C}\leq L+\sum_{\deg_T(v)>1}\deg_T(v)2^{h(v)}.
\end{equation}
\end{lemma}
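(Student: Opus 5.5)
We first show that every degree-one vertex $v$ with $A_v>1$ is occupied, and then combine this with Proposition~\ref{prop:upper-weighted}.

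\emph{Leaves of positive height are occupied.} Let $v$ have degree one and $A_v>1$. Then $h(v)>0$, so $v$ has an incoming auxiliary arrow along its unique edge $uv$. Every auxiliary arrow lies on a retained edge, so $uv$ is retained.

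Since $v$ has degree one, no other retained edge is incident with $v$. The last clause of Lemma~\ref{lem:upper-support} (a retained edge $uv$ with no other retained edge at $u$ forces $C(u)>0$), applied with the roles of $u$ and $v$ exchanged, gives $C(v)>0$. Alternatively one can argue directly: $T_{v\mid u}=\{v\}$ must be occupied for $uv$ to be retained. This proves the first claim.

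\emph{Passing from weighted to unweighted mass.} Since $A_w\ge1$ for every $w$, we have $\mass{C}\le\sum_w A_wC(w)$. Proposition~\ref{prop:upper-weighted} then gives
\[
 \mass{C}\le\sum_w\deg_T(w)A_w .
\]
The terms of degree greater than one are already $\deg_T(w)2^{h(w)}$. It remains to replace each leaf term $A_v$ by $1$, and this is where the care lies.

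\emph{Absorbing the leaf excess.} For each leaf $v$ we have $\mass{C}=\sum_w C(w)$ and $A_wC(w)\ge C(w)$. I would keep the slack $(A_v-1)C(v)$ at each leaf $v$ instead of discarding it:
\[
 \mass{C}\le\sum_w A_wC(w)-\sum_{\deg v=1}(A_v-1)C(v).
\]
For a leaf with $A_v>1$, the first claim gives $C(v)\ge1$, so $(A_v-1)C(v)\ge A_v-1$. For a leaf with $A_v=1$, the slack is zero and so is $A_v-1$. In both cases the subtracted slack is at least $A_v-1$.

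Combining this with Proposition~\ref{prop:upper-weighted}, each leaf contributes at most $A_v-(A_v-1)=1$, while internal vertices contribute $\deg_T(w)2^{h(w)}$. Summing gives \eqref{eq:upper-compact}.

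The only delicate point is the occupancy claim, and in particular checking that an arrow into a leaf really sits on a retained edge. That holds by the construction of the auxiliary orientation: only oriented and defective two-negative states, both of which are retained, receive directions.
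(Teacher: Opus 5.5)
Your proposal is correct and follows essentially the same route as the paper: a leaf of positive height receives an auxiliary arrow along a retained edge whose leaf side is $\{v\}$, so $C(v)\ge1$; the slack $(A_v-1)C(v)\ge A_v-1$ at each leaf is then kept and combined with Proposition~\ref{prop:upper-weighted}. Your alternative appeal to the last clause of Lemma~\ref{lem:upper-support} is also valid but adds nothing beyond the direct argument.
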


\begin{proof}
If $A_v>1$, then $h(v)>0$, so there is an incoming auxiliary arrow at
$v$.  Its underlying edge is retained.  As $v$ has degree one, its side
of that edge is the singleton $\{v\}$; retention implies $C(v)\geq1$.
For any degree-one vertex, therefore,
\[
 A_v-1\leq (A_v-1)C(v):
\]
if $A_v=1$ both sides are zero, and otherwise $C(v)\geq1$ proves it.
Since $A_v\geq1$ at every vertex, all configuration slack terms are
nonnegative.  Using Proposition~\ref{prop:upper-weighted},
\begin{align*}
 \mass{C}
 &=\sum_v A_vC(v)-\sum_v(A_v-1)C(v)\\
 &\leq\sum_v\deg_T(v)A_v
       -\sum_{\deg_T(v)=1}(A_v-1)\\
 &=L+\sum_{\deg_T(v)>1}\deg_T(v)A_v.
\end{align*}
The last equality uses that every vertex of a nontrivial tree has positive
degree.  Replacing $A_v$ by $2^{h(v)}$ proves the claim.  Thus occupied
leaves pay the required slack, while unoccupied ambient leaves have unit
weight and require no payment.
\end{proof}

\subsection{Consolidation to one ambient root}

We prove a pointwise statement about heights before applying the estimator.
A set of vertices is called connected here if the unique tree path between
any two of its vertices stays in the set.  This agrees with connectedness
of the induced subgraph, by uniqueness of paths in a tree.

\begin{lemma}[Source partition]\label{lem:upper-source-partition}
The vertices of $T$ can be partitioned into connected sets $P_i$ with
vertices $s_i\in P_i$ such that
\[
 h(v)=\dist_T(s_i,v)\qquad(v\in P_i).
\]
\end{lemma}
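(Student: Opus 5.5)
The plan is to build the partition from a choice of ``predecessor'' pointers along longest directed paths, and then to use uniqueness of paths in a tree to turn heights into distances. The only facts about $h$ needed are its definition as the maximum length of a directed path ending at $v$, and the doubling inequality~\eqref{eq:upper-doubling}.

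\emph{Step 1: predecessors.} Let $v$ satisfy $h(v)>0$. Choose a longest directed path ending at $v$, and let $u\to v$ be its final arrow. The initial segment of this path ends at $u$ and has length $h(v)-1$, so $h(u)\ge h(v)-1$. Inequality~\eqref{eq:upper-doubling} gives $h(v)\ge h(u)+1$. Hence $h(u)=h(v)-1$, and we set $\pi(v)=u$; this is an arbitrary but fixed choice. Vertices with $h(v)=0$ receive no pointer. By construction each pointer is an edge of $T$, and $h$ drops by exactly one along it.

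\emph{Step 2: the partition.} Iterate $\pi$ starting from $v$. Since $h$ strictly decreases, the iteration terminates after exactly $h(v)$ steps, at a vertex $s$ with $h(s)=0$. The sequence of visited vertices has distinct heights, so its vertices are distinct. It is therefore a walk in $T$ without repeated vertices, that is, a tree path. By uniqueness of tree paths it is \emph{the} path from $s$ to $v$, and so $\dist_T(s,v)=h(v)$.

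Let the $s_i$ be the vertices of height zero, and let $P_i$ be the set of vertices whose pointer chain ends at $s_i$. These sets partition $V(T)$, and each contains its $s_i$.

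\emph{Step 3: connectedness.} Let $v\in P_i$. Every vertex on its pointer chain also has chain ending at $s_i$, so the whole $s_i$--$v$ path lies in $P_i$. For $v,w\in P_i$, the tree path from $v$ to $w$ is contained in the union of the $s_i$--$v$ and $s_i$--$w$ paths. Hence $P_i$ is connected in the sense defined above.

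I expect no serious obstacle here. The one point needing care is Step~1: the predecessor on a longest path has height \emph{exactly} $h(v)-1$, and not merely at least that, which is what makes chain length equal to height. Once that is in place, the identification of chain length with distance is just uniqueness of paths in a tree.
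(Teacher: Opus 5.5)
Your proof is correct and follows essentially the same route as the paper. The paper also fixes a predecessor arrow with height exactly one less, iterates it to a height-zero source, uses uniqueness of tree paths to get $h(v)=\dist_T(s,v)$, and takes the fibres over sources as the connected parts.
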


\begin{proof}
For every $v$ of positive height, choose once and for all an incoming
auxiliary arrow $p(v)\to v$ with $h(p(v))=h(v)-1$.  Such an arrow
exists by taking the last edge of a longest directed path ending at $v$:
its prefix has length $h(v)-1$, and \eqref{eq:upper-doubling} at the level
of heights prevents the height of its penultimate vertex from being larger.

Iterating $p$ decreases height by exactly one each time and ends at a
vertex $s(v)$ of height zero after $h(v)$ steps.  No vertex repeats, so
this predecessor chain is the unique tree path from $v$ to $s(v)$ and
$h(v)=\dist_T(s(v),v)$.  Every vertex on the chain has the same terminal
source, since its subsequent predecessor choices are fixed.  Each nonempty
fibre $P_s=\{v:s(v)=s\}$ therefore contains $s$ and the full path from
each of its vertices to $s$.  The path between two vertices of $P_s$ is
contained in the union of their paths to $s$, hence stays in $P_s$.
The nonempty fibres give the required connected partition.
\end{proof}

\begin{lemma}[Merging height-dominated parts]\label{lem:upper-merge}
Let $A,B$ be disjoint nonempty connected vertex sets joined by an edge
$xy$, with $x\in A$ and $y\in B$.  Suppose $a\in A$, $b\in B$ and
$g:V(T)\to\NN$ satisfy
\[
 g(v)\leq\dist_T(a,v)\quad(v\in A),\qquad
 g(v)\leq\dist_T(b,v)\quad(v\in B).
\]
Then $A\cup B$ is connected, and one of $a,b$ satisfies the same domination
on all of $A\cup B$.
\end{lemma}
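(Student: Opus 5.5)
The plan is to use the tree structure to compute distances across the joining edge $xy$ exactly, and then pick whichever of $a,b$ lies no farther from its end of that edge.

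\emph{Connectedness and the crossing formula.} Take $v\in B$. Since $A$ is connected, the tree path from $a$ to $x$ lies in $A$. Likewise the path from $y$ to $v$ lies in $B$. Concatenating them with the edge $xy$ gives a walk from $a$ to $v$ inside $A\cup B$. This walk repeats no vertex, because $A$ and $B$ are disjoint and each segment is itself a path. So it is a path, and by uniqueness of tree paths it is \emph{the} path from $a$ to $v$. Together with connectedness of $A$ and of $B$ separately, this shows $A\cup B$ is connected. It also gives the exact formula
\[
 \dist_T(a,v)=\dist_T(a,x)+1+\dist_T(y,v)\qquad(v\in B),
\]
and symmetrically $\dist_T(b,v)=\dist_T(b,y)+1+\dist_T(x,v)$ for $v\in A$.

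\emph{Choice of the root.} Assume without loss of generality that $\dist_T(a,x)\ge\dist_T(b,y)$, exchanging the roles of $(A,a,x)$ and $(B,b,y)$ otherwise. We claim that $a$ dominates $g$ on $A\cup B$.

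\emph{Checking the claim.} On $A$ the domination is the hypothesis. For $v\in B$, the hypothesis and the triangle inequality give
\[
 g(v)\le\dist_T(b,v)\le\dist_T(b,y)+\dist_T(y,v).
\]
Since $\dist_T(b,y)\le\dist_T(a,x)<\dist_T(a,x)+1$, the right-hand side is less than $\dist_T(a,x)+1+\dist_T(y,v)=\dist_T(a,v)$ by the crossing formula. Hence $g(v)\le\dist_T(a,v)$ on all of $A\cup B$.

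The only step needing care is identifying the concatenated walk as the tree geodesic, which relies on disjointness of $A$ and $B$. The rest is one triangle inequality.
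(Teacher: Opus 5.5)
Your proof is correct and follows essentially the paper's argument: use disjointness and uniqueness of tree paths to get the exact crossing-distance formula, then apply one triangle inequality on the side whose root is no farther from its endpoint of $xy$. The paper splits at $\dist_T(b,y)\le\dist_T(a,x)+1$ rather than at your $\dist_T(b,y)\le\dist_T(a,x)$, and it proves the crossing formula for arbitrary $z\in A$, $w\in B$, which gives connectedness in its path sense directly; both differences are immaterial.
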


\begin{proof}
Paths within the two sets, together with $xy$, connect their union.
If $z\in A$ and $w\in B$, concatenate the path from $z$ to $x$ in
$A$, the edge $xy$, and the path from $y$ to $w$ in $B$.  Disjointness
makes this a simple path, so it is the unique tree path and
\begin{equation}\label{eq:upper-boundary-distance}
 \dist_T(z,w)=\dist_T(z,x)+1+\dist_T(y,w).
\end{equation}
Put $\alpha=\dist_T(a,x)$ and $\beta=\dist_T(b,y)$.
If $\beta\leq\alpha+1$, then for every $v\in B$ the triangle
inequality and \eqref{eq:upper-boundary-distance} give
\[
 g(v)\leq\dist_T(b,v)
 \leq\beta+\dist_T(y,v)
 \leq\alpha+1+\dist_T(y,v)=\dist_T(a,v).
\]
The pre-existing domination on $A$ is unchanged, so $a$ works.
Otherwise $\beta>\alpha+1$, which implies $\alpha\leq\beta+1$;
the same argument with the two sets exchanged shows that $b$ works.
\end{proof}

\begin{proposition}[Height-dominating root]\label{prop:upper-consolidation}
There exists a vertex $r\in V(T)$ such that
\begin{equation}\label{eq:upper-height-domination}
 h(v)\leq\dist_T(r,v)\qquad(v\in V(T)).
\end{equation}
\end{proposition}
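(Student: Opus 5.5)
We combine Lemma~\ref{lem:upper-source-partition} with repeated application of Lemma~\ref{lem:upper-merge}, taking $g=h$ throughout. By Lemma~\ref{lem:upper-source-partition}, $V(T)$ is partitioned into connected sets $P_1,\dots,P_m$ with sources $s_i\in P_i$ and $h(v)=\dist_T(s_i,v)$ on $P_i$. In particular each pair $(P_i,s_i)$ satisfies the domination hypothesis $h(v)\le\dist_T(s_i,v)$ for $v\in P_i$. We then merge parts one at a time, keeping the invariant that the current family consists of disjoint, nonempty, connected sets covering $V(T)$, and that each set comes with a designated vertex dominating $h$ on it.

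The merging step runs as follows. Suppose the current family has at least two members. Since $T$ is connected and the sets partition $V(T)$, some edge $xy$ of $T$ joins two distinct members $A\ni x$ and $B\ni y$. Indeed, otherwise one member and its complement would disconnect $T$. Lemma~\ref{lem:upper-merge} applies to $A$, $B$, their designated vertices, and $g=h$. It shows that $A\cup B$ is connected and that one of the two designated vertices dominates $h$ on the union. Replace $A$ and $B$ by $A\cup B$ with that vertex. The family remains a partition into connected sets with the invariant intact, and the number of parts decreases by one.

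After $m-1$ merges, a single set remains, namely $V(T)$ itself, with a designated vertex $r$. The invariant then gives exactly $h(v)\le\dist_T(r,v)$ for all $v\in V(T)$, as required.

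The main point needing care is the existence of a joining edge between two distinct parts at each stage. This holds by connectivity of $T$, as above. Everything else is bookkeeping: an induction on the number of parts, with the hypotheses of Lemma~\ref{lem:upper-merge} (disjointness, nonemptiness, connectedness, domination) preserved at every step.
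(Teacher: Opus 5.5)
Your proposal is correct and matches the paper's proof: you start from the connected source partition of Lemma~\ref{lem:upper-source-partition}, find a joining edge by connectivity of $T$, and merge two parts via Lemma~\ref{lem:upper-merge} with $g=h$. Each merge lowers the number of parts by one until a single height-dominated part with root $r$ remains. Your justification of the joining edge is equivalent to the paper's ``first exit edge'' argument.
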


\begin{proof}
Begin with the connected source partition of
Lemma~\ref{lem:upper-source-partition}; each part is height-dominated by
its own source.  If there is more than one part, connectedness of $T$
provides an edge joining two parts: for example, follow a path from a
vertex in one part to a vertex outside it and take its first exit edge.
Lemma~\ref{lem:upper-merge}, with $g=h$, merges those two parts and
supplies a root that height-dominates their union.  The resulting sets are
still a partition into connected, height-dominated parts.  Each merge
reduces the number of parts by one, so finitely many merges leave the
whole vertex set and one dominating root.
\end{proof}

\begin{theorem}[Arbitrary-defect upper bound]\label{thm:mass-bound}
For every configuration $C$ on a finite tree $T$ with at least two vertices,
\[
 \bigl(S_v(C)\leq0\text{ for all }v\bigr)
 \quad\Longrightarrow\quad
 \mass{C}\leq\estim(T)-1.
\]
\end{theorem}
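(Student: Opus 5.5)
The plan is to assemble the pieces of this section in order. If $C$ is identically zero, then $\mass{C}=0\le\estim(T)-1$ by~\eqref{eq:estim-large}, so I may assume some vertex $o$ is occupied. This is exactly the setting in which the partial orientation, the heights $h(v)$ and the weights $A_v=2^{h(v)}$ were defined. The hypothesis $S_v(C)\le0$ for all $v$ makes every defect $\delta_v$ nonnegative, which is what Lemma~\ref{lem:upper-classification} and the charge lemmas require.

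\textbf{Step 1.} Apply Lemma~\ref{lem:upper-leaf-slack}. It combines Proposition~\ref{prop:upper-weighted} with the observation that leaves of positive height are occupied, and yields
\[
 \mass{C}\le L+\sum_{\deg_T(v)>1}\deg_T(v)2^{h(v)}.
\]

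\textbf{Step 2.} Invoke Proposition~\ref{prop:upper-consolidation} to obtain a root $r$ with $h(v)\le\dist_T(r,v)$ for every vertex $v$. Since each coefficient $\deg_T(v)$ is positive and $2^x$ is monotone, replacing $2^{h(v)}$ by $2^{\dist_T(r,v)}$ termwise gives
\[
 \mass{C}\le L+\sum_{\deg_T(v)>1}\deg_T(v)2^{\dist_T(r,v)}.
\]

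\textbf{Step 3.} By the compact form~\eqref{eq:compact-estimator}, this right-hand side equals $E_T(r)-1$. This uses the uniform-in-root identity, including the case where $r$ is a leaf, which the paper verifies after~\eqref{eq:compact-estimator}. Finally, $E_T(r)-1\le\estim(T)-1$ by the definition of $\estim$ as a maximum.

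I expect no real obstacle, since the heavy lifting lies in Proposition~\ref{prop:upper-weighted} and Proposition~\ref{prop:upper-consolidation}. The point needing care is that the consolidated root $r$ may differ from $o$, and that the leaf terms have already been removed with coefficient $1$, independent of height. Only the internal-vertex sum depends on $h$, so the domination of Step~2 applies only there, and the count $L$ matches~\eqref{eq:compact-estimator} regardless of whether $r$ is a leaf.
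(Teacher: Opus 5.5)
Your proposal is correct and follows the paper's proof essentially step for step: you dispose of $C=0$ via~\eqref{eq:estim-large}, apply Lemma~\ref{lem:upper-leaf-slack}, dominate heights by distances from the root supplied by Proposition~\ref{prop:upper-consolidation}, and conclude with the compact identity~\eqref{eq:compact-estimator}. The only cosmetic difference is that you name the right-hand side $E_T(r)-1$ before taking the maximum, whereas the paper passes directly to the maximum over roots.
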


\begin{proof}
For $C=0$, the bound follows from the positive estimator formula.  Otherwise
the preceding construction applies.  Choose the root $r$ from
Proposition~\ref{prop:upper-consolidation}.  Since $2^x$ increases with
$x$, Lemma~\ref{lem:upper-leaf-slack} gives
\begin{align*}
 \mass{C}
 &\leq L+\sum_{\deg_T(v)>1}\deg_T(v)2^{h(v)}\\
 &\leq L+\sum_{\deg_T(v)>1}\deg_T(v)2^{\dist_T(r,v)}\\
 &\leq L+\max_{s\in V(T)}
              \sum_{\deg_T(v)>1}\deg_T(v)2^{\dist_T(s,v)}
   =\estim(T)-1.
\end{align*}
The final equality is the estimator identity~\eqref{eq:compact-estimator}.
Every quantity in this calculation uses the ambient tree.
\end{proof}

Together with the exact score criterion (Theorem~\ref{thm:root-score}), Theorem~\ref{thm:mass-bound}
implies that every non-stackable configuration has mass at most
$\estim(T)-1$, and hence every configuration of mass $\estim(T)$ is
stackable.